\def\op{\text{op}}
\def\Z{\mathbb{Z}}
\def\cO{\mathcal{O}}
\def\Id{\mathrm{Id}}
\def\coh{\operatorname{coh}}
\def\qcoh{\operatorname{Qcoh}}
\def\Spec{\operatorname{Spec}}
\def\Proj{\operatorname{Proj}}
\numberwithin{equation}{section}
\theoremstyle{plain} 
\newtheorem{thm}[equation]{Theorem}
\newtheorem*{introthm*}{Theorem}
\newtheorem{cor}[equation]{Corollary}
\newtheorem{lem}[equation]{Lemma}
\newtheorem{prop}[equation]{Proposition}
\theoremstyle{definition}
\newtheorem{defn}[equation]{Definition}
\newtheorem{ex}[equation]{Example}
\theoremstyle{remark}
\newtheorem{rem}[equation]{Remark}
\newtheorem*{rems}{Remark}
\newtheorem*{ack}{Acknowledgements}
\newenvironment{claim}[1]{\par\noindent\emph{Claim}\space#1:}{}
\newenvironment{claimproof}{\par\noindent\emph{Proof of claim:}}{\hfill}
\def\coh{\operatorname{coh}}
\def\coker{\operatorname{coker}}
\DeclareMathOperator*{\colim}{colim}
\newcommand{\Ext}[4]{\operatorname{Ext}_{#2}^{#1}(#3,#4)}
\newcommand{\uExt}[4]{\operatorname{\underline{Ext}}_{#2}^{#1}(#3,#4)}
\newcommand{\Hom}[3]{\operatorname{Hom}_{#1}(#2,#3)}
\newcommand{\uHom}[3]{\operatorname{\underline{Hom}}_{#1} (#2, #3) }
\newcommand{\RuHom}[3]{\R\operatorname{\underline{Hom}}_{#1} (#2, #3) }
\newcommand{\li}{ < \infty}
\newcommand{\xra}[1]{\xrightarrow{#1}}
\newcommand{\psc}[1]{\mathbb{P}_{#1}^{\text{c}-1}}
\newcommand{\id}{\operatorname{id}}
\newcommand{\grA}{\operatorname{gr}A}
\newcommand{\grAop}{\operatorname{gr}A^\op}
\newcommand{\GrA}{\operatorname{Gr}A}
\newcommand{\grAi}{\operatorname{gr}_{\geq i}A}
\newcommand{\grAo}{\grA_{\geq 0}}
\newcommand{\GrAi}{\operatorname{Gr\,}_{\geq i}A}
\newcommand{\Gi}{\Gamma_{\geq i}}
\newcommand{\torsA}{\operatorname{tors}A}
\newcommand{\torsAi}{\operatorname{tors}_{\geq i}A}
\newcommand{\TorsA}{\operatorname{Tors}A}
\newcommand{\TorsAi}{\operatorname{Tors}_{\geq i}A}
\newcommand{\Dp}[1]{\mathsf{D}( #1 )}
\newcommand{\Db}[1]{\mathsf{D}^{\mathsf{b}}( #1 )}
\newcommand{\Dsg}[1]{\mathsf{D}^{\mathsf{b}}_{\mathsf{sg}}( #1 )}
\newcommand{\T}{\mathcal{T}}
\newcommand{\D}{\mathcal{D}}
\newcommand{\gend}[1]{\,\langle ~\!\! #1 \!\!\!~\rangle}
\newcommand{\Sg}[1]{\mathcal{S}_{\geq #1}}
\newcommand{\Sl}[1]{\mathcal{S}_{< #1}}
\newcommand{\Pg}[1]{\mathcal{P}_{\geq #1}}
\newcommand{\Pl}[1]{\mathcal{P}_{< #1}}
\newcommand{\Bi}{\mathcal{B}_i}
\newcommand{\X}{X}
\newcommand{\twid}[1]{\widetilde{#1}}
\newcommand{\R}{\boldsymbol{\operatorname{R}}}
\newcommand{\perf}[1]{\operatorname{perf}(#1)}
\newcommand{\bb}{\textbf{b}}
\newcommand{\tors}[1]{\tau(#1)}
\newcommand{\RY}{\beta}
\newcommand{\bE}{\mathbb{E}}
\newcommand{\emf}[1]{(E_1 \xra{e_1} E_0
\xra{e_0} E_1(#1))}
\newcommand{\fmf}[1]{(F_1 \xra{f_1} F_0 \xra{f_0} F_1(#1))}
\newcommand{\grmf}{\operatorname{gr-mf}(S, W)}
\newcommand{\hgrmf}{[\grmf]}
\newcommand{\grS}{\operatorname{gr}S}
\renewcommand{\bf}{\boldsymbol{f}}
\begin{document}
\title[The derived category of a graded Gorenstein ring]{The derived category of a graded \\Gorenstein ring}
\author{Jesse Burke}
\address{Mathematics Department\\ 
UCLA\\ 
Los Angeles, CA\\ 90095-1555, USA}
\email{jburke@math.ucla.edu}
\author{Greg Stevenson}
\address{Universitat Bielefeld, Fakult\"{a}t f\"{u}r Mathematik, BIREP Gruppe, Postfach 10 01 31, 33501
Bielefeld, Germany.}
\email{gstevens@math.uni-bielefeld.de}

\begin{abstract} 
We give an exposition and generalization of Orlov's theorem on graded Gorenstein
rings. We show the theorem holds for non-negatively graded rings which are Gorenstein in an appropriate sense and whose degree zero
component is an arbitrary non-commutative right noetherian ring of finite
global dimension. A short treatment of some foundations for local cohomology and
Grothendieck duality at this level of generality is given in order to prove the
theorem. As an application we give an equivalence of the
derived category of a commutative complete intersection with the homotopy category of graded
matrix factorizations over a related ring.
\end{abstract}

\maketitle

\section{Introduction}
Let $A$ be a graded Gorenstein ring. In \cite{MR2641200} Orlov related the bounded derived
category of coherent sheaves on $\Proj A$ and the singularity category
of graded $A$-modules via fully faithful functors; the exact relation depends on the $a$-invariant
of $A$. This is a striking theorem that has found applications in
physics, algebraic geometry and representation theory. To give an idea
of the scope of the theorem: in the limiting case that $A$ has finite
global dimension (so the singularity category is trivial), it recovers (and generalizes to non-commutative
rings) Beilinson's result \cite{MR509388} that the derived category of $\Proj A$ is
generated by a finite sequence of twists of the structure sheaf.

There has been much work related to this theorem. The idea of Orlov's construction perhaps first appears in Van Den
Bergh's paper on non-commutative crepant resolutions
\cite{MR2077594} where he described functors similar to those
considered by Orlov, in the case of torus invariants. After Orlov's paper appeared, the
idea was further explored by the physicists Herbst, Hori and Page in
\cite{0803.2045}. In turn these ideas were the inspiration for two
papers on the derived category of GIT quotients \cite{1203.6643,1203.0276}. Segal and
then Shipman gave geometric proofs of Orlov's theorem for commutative
hypersurfaces in \cite{MR2795327} and \cite{MR2982435}. Related
results are \cite{MR2593258} and \cite{1011.1484}. The theorem has
been used in similar ways in \cite{MR2947547,MR2776613}. Finally, it
has been used in representation theory; especially in the study of
weighted projective lines, see e.g.\ \cite{MR2931898}.

Orlov assumed that $A_0$ a field. In this paper, which we consider
largely expository,
we generalize his result to show that the same relation holds when
$A_0$ is a non-commutative
noetherian ring of finite global dimension. This has an immediate
application to commutative complete intersection rings and we expect
there to be further applications, for instance to (higher) preprojective
algebras. The structure of our proof is very close to Orlov's original arguments. We give many details and we
hope that these details may help the reader (even one only interested in
algebras defined over a field) to better understand Orlov's work.

The main tool in the proof is a semiorthogonal
decomposition; this separates a triangulated category into an
admissible subcategory and its orthogonal. Derived global sections
gives an embedding of $\Db {\Proj
  A}$ into $\Db {\grAi}$ as an admissible subcategory. When $A_0$ is a
field, Orlov showed that there is an
embedding of the singularity category $\Dsg \grA$ into $\Db \grAi$. The
existence of such an embedding is rather remarkable and constitutes
perhaps the key insight required to prove the theorem. Orlov then used
Grothendieck duality in a
very clever way to compare the orthogonals of $\Db {\Proj
  A}$ and $\Dsg \grA$ inside of $\Db \grAi$. For example when $A$ is
Calabi-Yau, the orthogonals coincide and so there is an equivalence
between $\Db {\Proj
  A}$ and $\Dsg \grA$.

The arguments we present here follow those of Orlov. The main addition
is the observation that, when $A_0$ has finite global dimension, one
can construct particularly nice resolutions of complexes of graded
modules with bounded finitely generated cohomology. This allows us to
prove Orlov's embedding $\Dsg \grA \to \Db \grAi$ is valid in this
more general setting. We also need to develop some foundations
concerning local cohomology and Grothendieck duality over
non-commutative rings to prove analogues of the other steps of Orlov's proof.
These foundations do not seem
to be contained in the literature in the form and generality that we
need, although the arguments we give here are relatively straightforward
generalizations of arguments by Artin and Zhang \cite{MR1304753}.


Let us give a quick summary of the paper. The second section contains
some categorical background, especially on semiorthogonal
decompositions. The third section is devoted to the derived category
of graded modules over a
graded ring, and some standard semiorthogonal decompositions that
appear there. This section contains the key observation, Lemma~\ref{gregs_lemma}, needed to prove the embedding of the singularity category works for the rings we work with. The fourth section deals with local cohomology and the
semiorthogonal decomposition it gives, while the fifth deals with the
embedding of the singularity category, Grothendieck duality, and the
semiorthogonal decomposition these give. The sixth section contains
the proof of the main result as well as a sufficient condition for a Gorenstein ring to satisfy Artin and Zhang's condition
$\chi$, which is necessary for the proof. Finally, in the last section, we apply
the main theorem to give a description of the bounded derived
category of a complete intersection ring in terms of graded matrix factorizations.

\begin{ack}
Ragnar-Olaf Buchweitz gave two
beautiful lectures on Orlov's theorem at Bielefeld University in
July 2011 that inspired this and from which we learned a lot. The existence of the
crucial left adjoint to the projection onto the singularity category
over general bases of finite global dimension is implicitly
contained in
Ragnar's exposition of Orlov's work. We thank Mark Walker for
several helpful conversations on the contents of the paper.
\end{ack}

\section{Background}
We recall here some standard results on semiorthogonal decompositions of 
triangulated categories which we will need. Throughout this section $\T$ denotes a triangulated category.

\begin{defn}
For $\D$ a triangulated subcategory of $\T$, define $\D^\perp$ to be the full
subcategory with objects those $X \in \T$ such that $\Hom \T D X = 0$
for all objects $D$ of $\D$. Similarly, ${}^\perp\D$ has objects those
$X$ with $\Hom \T X D = 0$ for all $D$ in $\D$. Both $\D^\perp$ and ${}^\perp\D$ 
are thick subcategories of $\T$ i.e., they are triangulated subcategories which 
are closed under taking direct summands.
\end{defn}

\begin{defn}
A triangulated subcategory $\D$ of $\T$ is \emph{left admissible} in $\T$ if
the inclusion functor $i: \D \to \T$ has a left adjoint; $\D$ is
\emph{right admissible} if $i$ has a right adjoint.
\end{defn}

The following criterion for admissibility can be found as \cite[Lemma~3.1]{MR992977}.

\begin{lem}
\label{admiss_crit}
 Let $\D$ be a triangulated subcategory of $\T$. 
 \begin{enumerate}
\item The category $\D$ is left admissible if
 and only if for every $X$ in $\T$ there is a triangle:
\[ E_X \to X \to D_X \to \Sigma E_X\]
with $D_X$ in $\D$ and $E_X$ in $^\perp D$.
 \item The category $\D$ is right admissible if and only if  for every $X$ in $\T$ there is a triangle:
   \[ D_X \to X \to E_X \to \Sigma D_X\] with ${D_X}$ in $\D$ and
   $E_X$ in $\D^\perp$.
 \end{enumerate}
\end{lem}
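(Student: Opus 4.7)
\smallskip
\noindent\textbf{Proof plan.}
Since (2) is formally dual to (1) (apply (1) in $\T^{\op}$ and note that ${}^\perp$ and $(-)^\perp$ swap), the plan is to prove (1) carefully and deduce (2).

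For the forward direction of (1), assume a left adjoint $\ell \colon \T \to \D$ to the inclusion $i \colon \D \to \T$ exists. For each $X \in \T$ I take the unit $\eta_X \colon X \to i\ell X$ and complete it to a distinguished triangle
\[
E_X \xrightarrow{} X \xrightarrow{\eta_X} i\ell X \xrightarrow{} \Sigma E_X,
\]
so that $D_X := i\ell X$ lies in $\D$ by construction. To check that $E_X \in {}^\perp\D$, I apply $\Hom_\T(-, iD)$ to the triangle for an arbitrary $D \in \D$. The adjunction together with the fact that $i$ is fully faithful (as $\D$ is a full subcategory) identifies $\Hom_\T(i\ell X, iD)$ with $\Hom_\D(\ell X, D)$ and $\Hom_\T(X, iD)$ with $\Hom_\D(\ell X, D)$, and the map induced by $\eta_X$ is the identity under these identifications. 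Hence $\Hom_\T(\eta_X, iD)$ is an isomorphism, which forces $\Hom_\T(E_X, iD) = \Hom_\T(\Sigma E_X, iD) = 0$; since $D$ was arbitrary, $E_X \in {}^\perp\D$.

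For the converse, suppose every $X$ admits a triangle $E_X \to X \to D_X \to \Sigma E_X$ with $D_X \in \D$ and $E_X \in {}^\perp\D$. I would build the left adjoint by showing $X \to D_X$ is universal among maps from $X$ to objects of $\D$. Explicitly, for any $D \in \D$, applying $\Hom_\T(-, D)$ to the triangle yields
\[
\Hom_\T(\Sigma E_X, D) \to \Hom_\T(D_X, D) \to \Hom_\T(X, D) \to \Hom_\T(E_X, D),
\]
and the outer two terms vanish because $E_X \in {}^\perp\D$ and ${}^\perp\D$ is stable under $\Sigma$. Hence composition with $X \to D_X$ gives a bijection $\Hom_\T(D_X, D) \cong \Hom_\T(X, D)$, natural in $D$. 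This universal property both singles out $D_X$ up to canonical isomorphism (so I may set $\ell X := D_X$ unambiguously once I choose one such triangle for each $X$) and determines $\ell$ on morphisms: given $f \colon X \to X'$, the composite $X \to X' \to D_{X'}$ factors uniquely through $X \to D_X$, giving $\ell f$. Functoriality and the adjunction identity $\Hom_\T(X, iD) \cong \Hom_\D(\ell X, D)$ are then immediate from the uniqueness clause.

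The main obstacle is purely bookkeeping in the converse: the triangle completing a given morphism is unique only up to non-canonical isomorphism, so one has to be careful that $\ell$ really is a functor. The universal property derived from the Hom-vanishing $\Hom_\T(\Sigma^n E_X, D) = 0$ for $n = 0, 1$ handles exactly this, by upgrading the choice of $D_X$ to a functor via the uniqueness of factorizations. With (1) proved, (2) drops out by dualizing the argument in $\T^{\op}$.
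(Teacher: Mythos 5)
Your proof is correct. The paper offers no proof of this lemma at all --- it is quoted directly from Bondal \cite{MR992977} --- so there is nothing to compare against; your argument (unit of the adjunction completed to a triangle in one direction, and the vanishing $\Hom_\T(\Sigma^n E_X, D)=0$ yielding a universal arrow, hence a left adjoint, in the other) is the standard one underlying the cited result. The only point worth making explicit in the forward direction is that deducing $\Hom_\T(E_X, iD)=0$ from the long exact sequence uses that $\Hom_\T(\Sigma^n\eta_X, iD)$ is an isomorphism for the neighbouring values of $n$ as well, which is automatic since $\D$ is closed under shifts and $D$ ranges over all of $\D$.
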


\begin{cor}
 A subcategory $\D$ of $\T$ is left admissible if and only if ${}^\perp
\D$ is right admissible. In this case $\left ( {}^\perp\D \right )^\perp =
   \D$.
\end{cor}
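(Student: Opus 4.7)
The approach is to apply Lemma~\ref{admiss_crit} twice and observe that, once the identification $\D = ({}^\perp \D)^\perp$ is in hand, the triangles witnessing left admissibility of $\D$ (from part~(1)) and right admissibility of ${}^\perp \D$ (from part~(2) applied to ${}^\perp \D$) are literally the same triangles. The corollary therefore reduces to proving this identification via a splitting trick.

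First I would prove the forward direction. If $\D$ is left admissible, Lemma~\ref{admiss_crit}(1) gives for each $X \in \T$ a triangle
\[ E_X \to X \to D_X \to \Sigma E_X \]
with $D_X \in \D$ and $E_X \in {}^\perp \D$. The inclusion $\D \subseteq ({}^\perp \D)^\perp$ is tautological, so in particular $D_X \in ({}^\perp \D)^\perp$, and this very triangle exhibits right admissibility of ${}^\perp \D$ via Lemma~\ref{admiss_crit}(2).

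Next I would establish the equality $({}^\perp \D)^\perp = \D$ by a splitting argument; only the nontrivial inclusion is at issue. Given $Y \in ({}^\perp \D)^\perp$, apply the triangle above with $X = Y$. The map $E_Y \to Y$ is a morphism from ${}^\perp \D$ into $({}^\perp \D)^\perp$, hence zero by definition of $({}^\perp \D)^\perp$, so the triangle splits as $D_Y \cong Y \oplus \Sigma E_Y$. Now $\Sigma E_Y \in {}^\perp \D$ (closure under shift) is realized as a direct summand of $D_Y \in \D$, but the embedding $\Sigma E_Y \hookrightarrow D_Y$ is a morphism from ${}^\perp \D$ into $\D$, which must vanish. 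Thus the identity on $\Sigma E_Y$ factors through zero, forcing $\Sigma E_Y = 0$ and hence $Y \cong D_Y \in \D$.

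The converse is then formal: if ${}^\perp \D$ is right admissible, Lemma~\ref{admiss_crit}(2) applied to ${}^\perp \D$ produces triangles $E_X \to X \to F_X \to \Sigma E_X$ with $E_X \in {}^\perp \D$ and $F_X \in ({}^\perp \D)^\perp = \D$, so Lemma~\ref{admiss_crit}(1) delivers left admissibility of $\D$. The only nontrivial ingredient in the whole argument is the splitting trick used to conclude $\Sigma E_Y = 0$; everything else is pure bookkeeping against Lemma~\ref{admiss_crit} and the definition of ${}^\perp(-)$ and $(-)^\perp$.
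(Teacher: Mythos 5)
Your forward direction and your proof of the identity $\left({}^\perp\D\right)^\perp = \D$ are correct: the splitting trick (the map $E_Y \to Y$ vanishes because $E_Y \in {}^\perp\D$ and $Y \in ({}^\perp\D)^\perp$, so $D_Y \cong Y \oplus \Sigma E_Y$, and the summand $\Sigma E_Y$ must then vanish because any morphism from an object of ${}^\perp\D$ to an object of $\D$ is zero) is exactly the standard argument, and the paper offers no proof of its own to compare against.

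The converse, however, is circular as written. In your last paragraph you place the third term $F_X$ of the triangle in ``$({}^\perp\D)^\perp = \D$'', but that equality was established only under the hypothesis that $\D$ is left admissible --- which is precisely what the converse is trying to prove. Starting only from right admissibility of ${}^\perp\D$, Lemma~\ref{admiss_crit}(2) puts $F_X$ in $({}^\perp\D)^\perp$, and a priori you have only the tautological inclusion $\D \subseteq ({}^\perp\D)^\perp$. The gap is not cosmetic: without the extra hypothesis $({}^\perp\D)^\perp = \D$ the converse is false. For example, take $\T = \Db{\coh{\mathbb{P}^1}}$ and let $\D$ be the thick subcategory of complexes with torsion cohomology; then ${}^\perp\D = 0$ (every nonzero object admits a nonzero map to a shifted skyscraper), which is certainly right admissible, yet $\D$ is not left admissible, since left admissibility together with ${}^\perp\D = 0$ would force $\D = \T$. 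What your argument honestly establishes is: $\D$ is left admissible if and only if ${}^\perp\D$ is right admissible \emph{and} $({}^\perp\D)^\perp = \D$; equivalently, $\D \mapsto {}^\perp\D$ and $\mathcal{E} \mapsto \mathcal{E}^\perp$ are mutually inverse bijections between left admissible and right admissible subcategories. That is the form in which the corollary is actually used later in the paper, so the defect lies partly in the statement itself --- but your converse step, as written, does not follow from what precedes it and should either carry the equality as a hypothesis or be restated as the bijection.
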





\begin{defn}
A \emph{semiorthogonal decomposition} of
$\T$ is a pair of subcategories $\mathcal{A}$ and $\mathcal{B}$ such that
$\mathcal{A}$ is left admissible and $\mathcal{B} = {}^\perp
\mathcal{A}$ (equivalently, $\mathcal{B}$ is right admissible and
$\mathcal{A} = \mathcal{B}^\perp$). We write this as
\[ \T = \left ( \mathcal{A}, \mathcal{B} \right ).\]
\end{defn}

The following useful lemma essentially follows from Lemma~\ref{admiss_crit} in a straightforward way.

\begin{lem}
\label{crit-for-so-decomp}
 There is a semiorthogonal decomposition $\T =  \left ( \mathcal{A},
  \mathcal{B} \right )$ if and only if $\mathcal{B} \subseteq {}^\perp
\mathcal{A}$ and for every $X$ in $\T$ there is a triangle
\[ B_X \to X \to A_X \to \]
with $B_X \in \mathcal{B}$ and $A_X \in \mathcal{A}$.
We will call such a triangle the \emph{localization triangle} for
$X$.
\end{lem}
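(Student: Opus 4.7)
The plan is to prove both implications by direct appeal to Lemma~\ref{admiss_crit}(1), which already gives the correspondence between left admissibility of a subcategory and the existence of triangles splitting objects into pieces from the subcategory and its left orthogonal.

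For the forward direction, suppose $\T = (\mathcal{A}, \mathcal{B})$ is a semiorthogonal decomposition. By definition $\mathcal{B} = {}^\perp\mathcal{A}$, so in particular $\mathcal{B} \subseteq {}^\perp\mathcal{A}$. Left admissibility of $\mathcal{A}$, combined with Lemma~\ref{admiss_crit}(1), produces for each $X \in \T$ a triangle $E_X \to X \to D_X \to \Sigma E_X$ with $D_X \in \mathcal{A}$ and $E_X \in {}^\perp \mathcal{A} = \mathcal{B}$, which is the required triangle.

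For the reverse direction, suppose $\mathcal{B} \subseteq {}^\perp\mathcal{A}$ and that every $X \in \T$ sits in a triangle $B_X \to X \to A_X \to \Sigma B_X$ with $B_X \in \mathcal{B}$ and $A_X \in \mathcal{A}$. Since $B_X \in \mathcal{B} \subseteq {}^\perp \mathcal{A}$, Lemma~\ref{admiss_crit}(1) immediately implies $\mathcal{A}$ is left admissible. It remains to show $\mathcal{B} = {}^\perp\mathcal{A}$; only the inclusion $\mathcal{B} \supseteq {}^\perp\mathcal{A}$ needs argument. Given $X \in {}^\perp \mathcal{A}$, consider the hypothesized triangle for $X$. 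Both $X$ and $B_X$ lie in ${}^\perp \mathcal{A}$ (the latter by the standing hypothesis $\mathcal{B} \subseteq {}^\perp \mathcal{A}$), and ${}^\perp \mathcal{A}$ is triangulated, so $A_X \in {}^\perp \mathcal{A}$ as well. But $A_X$ also lies in $\mathcal{A}$, so $\Hom{\T}{A_X}{A_X} = 0$, forcing $A_X = 0$. The triangle then exhibits an isomorphism $X \cong B_X$, so $X \in \mathcal{B}$.

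I expect no serious obstacle; the only non-immediate point is the reverse inclusion $\mathcal{B} \supseteq {}^\perp \mathcal{A}$, where one must recognize that an object of $\mathcal{A} \cap {}^\perp \mathcal{A}$ is zero and leverage this via the given triangle. Everything else is a direct translation through Lemma~\ref{admiss_crit}.
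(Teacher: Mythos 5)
Your proof is correct and follows exactly the route the paper intends: the paper omits the proof, stating only that the lemma ``essentially follows from Lemma~\ref{admiss_crit} in a straightforward way,'' and your argument supplies precisely that deduction, including the one genuinely non-immediate point that an object of $\mathcal{A} \cap {}^\perp\mathcal{A}$ has zero identity morphism and hence vanishes.
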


Orlov generalized the definition of semiorthogonal decomposition in
\cite{MR2641200} to:
\begin{defn}
  A sequence of full triangulated subcategories
  $(\D_1, \ldots, \D_n)$ of $\T$ is a \emph{semiorthogonal decomposition}
  if for each $i = 1, \ldots, n - 1$, the thick subcategory generated by $\D_1,
  \ldots, \D_i$, which we denote $\gend
  {\D_1, \ldots, \D_i}$, 
is left admissible and 
\[^\perp \gend {\D_1, \ldots, \D_i} = \gend{
  \D_{i+1}, \ldots, \D_{n}}.\]
\end{defn}

We can (and will) construct semi-orthogonal decompositions inductively:

\begin{lem}
\label{build-up-of-so-decomps}
 Let $\T = (\mathcal{A}, \mathcal{B})$, $\mathcal{A} = (\D_1, \ldots, \D_i),$ and
 $\mathcal{B} = (\D_{i+1}, \ldots, \D_n)$ be semiorthogonal
 decompositions. Then
\[ \T = (\D_1, \ldots, \D_n)\]
is a semiorthogonal decomposition.
\end{lem}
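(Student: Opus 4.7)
The plan is to invoke Lemma~\ref{crit-for-so-decomp} once for each cut-point. For each $j \in \{1, \ldots, n-1\}$ it suffices to show that the pair $(\langle \D_1, \ldots, \D_j \rangle, \langle \D_{j+1}, \ldots, \D_n \rangle)$ is a semiorthogonal decomposition of $\T$, since this yields both the left admissibility of $\gend{\D_1, \ldots, \D_j}$ and the identification of its left orthogonal demanded by Orlov's definition. So I need to check two things: (i) the semi-orthogonality $\langle \D_{j+1}, \ldots, \D_n \rangle \subseteq \,^\perp \langle \D_1, \ldots, \D_j \rangle$, and (ii) the existence of a localization triangle for every $X \in \T$.

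For (i), it is enough to check that $\Hom{\T}{D_\ell}{D_k} = 0$ whenever $k \le j$ and $\ell \ge j+1$. I would split into three cases. If $k \le i < \ell$, then $D_k \in \mathcal{A}$ and $D_\ell \in \mathcal{B} = \,^\perp \mathcal{A}$, so the Hom vanishes. If $k < \ell \le i$, both objects live in the full subcategory $\mathcal{A}$, where the vanishing is part of the given decomposition $\mathcal{A} = (\D_1, \ldots, \D_i)$; passing to $\T$ does not change Hom-groups since $\mathcal{A}$ is a full subcategory. The case $i < k < \ell$ is symmetric and uses $\mathcal{B} = (\D_{i+1}, \ldots, \D_n)$.

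For (ii), the case $j = i$ is the given decomposition $\T = (\mathcal{A}, \mathcal{B})$. For $j < i$, given $X \in \T$, first form the $(\mathcal{A}, \mathcal{B})$ localization triangle $B \to X \to A$, and then apply the internal decomposition $\mathcal{A} = (\langle \D_1, \ldots, \D_j \rangle, \langle \D_{j+1}, \ldots, \D_i \rangle)$ to $A$ to obtain $C \to A \to A'$ with $A' \in \langle \D_1, \ldots, \D_j \rangle$ and $C \in \langle \D_{j+1}, \ldots, \D_i \rangle$. Apply the octahedral axiom to the composite $X \to A \to A'$: this produces a triangle $F \to X \to A'$ together with a triangle $B \to F \to C$. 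Since $B \in \langle \D_{i+1}, \ldots, \D_n \rangle$ and $C \in \langle \D_{j+1}, \ldots, \D_i \rangle$ both sit in $\langle \D_{j+1}, \ldots, \D_n \rangle$, extension-closure gives $F \in \langle \D_{j+1}, \ldots, \D_n \rangle$, and the triangle $F \to X \to A'$ is the desired localization triangle. The case $j > i$ is dual: refine $B$ using the internal decomposition $\mathcal{B} = (\langle \D_{i+1}, \ldots, \D_j \rangle, \langle \D_{j+1}, \ldots, \D_n \rangle)$ and apply the octahedral axiom again to build a localization triangle whose right-hand object is an extension of $A$ by an object of $\langle \D_{i+1}, \ldots, \D_j \rangle$, hence lies in $\langle \D_1, \ldots, \D_j \rangle$.

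The only mildly delicate point is the bookkeeping with the octahedral axiom — making sure that the refinement is carried out in the correct subcategory and that the third vertex ends up filtered by the right pieces. Beyond that, the argument is a direct application of Lemma~\ref{crit-for-so-decomp} together with the tautology that Hom-groups in the full subcategories $\mathcal{A}$ and $\mathcal{B}$ agree with those in $\T$.
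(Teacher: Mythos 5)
Your argument is correct and complete. The paper states this lemma without proof, so there is nothing to compare against; your verification via Lemma~\ref{crit-for-so-decomp} at each cut-point $j$ — checking semi-orthogonality on generators and building the localization triangle by refining either the $\mathcal{A}$-piece or the $\mathcal{B}$-piece with an octahedron — is exactly the standard argument the authors are implicitly relying on.
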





\section{The bounded derived category of graded modules}
We now provide some preliminary results on derived categories of graded modules. We 
begin by exhibiting some semiorthogonal decompositions of the bounded derived 
category which we will need in the sequel (and which are relatively 
straightforward generalisations of those in Orlov's work). We also prove the 
main technical results concerning graded projectives and graded projective 
resolutions which we will need for our extension of Orlov's result.

In this section $A = \bigoplus_{i \geq 0} A_i$ is a positively graded
 noetherian ring with $A_0$ a ring of finite global dimension.
All modules
 will be right modules unless otherwise stated. We denote by $\grA$ the abelian category of finitely generated graded
$A$-modules and degree zero homogeneous maps. If $M = \bigoplus M_i$ is a graded
$A$-module, then $M(1)$ is the graded $A$-module with $M(1)_i =
M_{i+1}$.


We denote by
$\grAi$ the full subcategory of $\grA$ consisting of objects $M$ such
that $M_j = 0$ for all $j < i$. This is an abelian subcategory of
$\grA$ and there is an adjoint pair of functors
\begin{displaymath}
\xymatrix{ \grAi \ar@<1ex>[r]^{\operatorname{inc}} & \grA
  \ar@<1ex>[l]^{(-)_{\geq i}}}
\end{displaymath}
where $M_{\geq i} = \bigoplus_{j \geq i} M_j$ is right adjoint to the inclusion.

We denote by $\Db -$ the bounded derived category of an abelian
category. Both functors of the above adjoint pair are exact and so
induce functors 
\begin{displaymath}
\xymatrix{ \Db \grAi \ar@<1ex>[r]^{\operatorname{inc}} & \Db \grA
  \ar@<1ex>[l]^{(-)_{\geq i}}}
\end{displaymath}
which also form an adjoint pair. The functor induced by inclusion is
fully faithful and the essential image is
the full subcategory of $\Db \grA$ consisting of objects $M$ such that
$H^j(M) \in \grAi$ for all $j\in \Z$. We denote this subcategory also
by $\Db \grAi$; it is a right admissible subcategory.


\begin{defn}
 Define $\Sl i$ to be the thick subcategory generated by the
 objects $A_0(e)$, for all $e > -i$ and $\Sg i$ to be the
 thick subcategory generated by $A_0(e)$, for all $e \leq -i$. 
\end{defn}

\begin{lem}
\label{desc_Sl}
 An object $M$ of $\Db \grA$ is in $\Sl i$ if and only if $M_{\geq i}
 \simeq 0$.
\end{lem}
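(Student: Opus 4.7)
The statement splits naturally into two directions, each treatable by a thick-subcategory argument.

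For the ``only if'' direction, observe that the functor $(-)_{\geq i}\colon \grA \to \grAi$ is exact and hence descends to an exact functor on bounded derived categories. The full subcategory $\{X \in \Db \grA : X_{\geq i} \simeq 0\}$ is therefore closed under shifts, cones, and direct summands, i.e.\ is thick. Each generator $A_0(e)$ of $\Sl i$ (with $e > -i$) is concentrated in internal degree $-e < i$ and so lies in this subcategory. Therefore $\Sl i$ is contained in it, yielding the desired implication.

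For the ``if'' direction, I would proceed by a double d\'evissage. First, since $(-)_{\geq i}$ commutes with cohomology, the hypothesis $M_{\geq i} \simeq 0$ gives $H^j(M)_{\geq i} = 0$ for every $j$. Because $M$ is bounded, the standard truncation triangles realise $M$ via finitely many triangles from the cohomology objects $H^j(M)[-j]$, each of which is a finitely generated graded $A$-module satisfying the same vanishing hypothesis. Thus it suffices to prove that every finitely generated graded $A$-module $N$ with $N_{\geq i} = 0$ lies in $\Sl i$.

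Such an $N$ is supported in finitely many internal degrees, say in the range $[d, i-1]$. I induct on the spread $i-1-d$: using that $A$ is positively graded, $N_{\geq i-1}$ is a graded $A$-submodule concentrated in the single degree $i-1$, and the short exact sequence $0 \to N_{\geq i-1} \to N \to N/N_{\geq i-1} \to 0$ gives a triangle in $\Db \grA$ whose quotient has strictly smaller spread. This reduces to the base case of $N$ concentrated in a single internal degree $k < i$. In the base case $A_{>0}$ acts trivially, so $N$ is a finitely generated $A_0$-module; by the finite global dimension hypothesis on $A_0$, it admits a finite projective resolution $0 \to Q_n \to \cdots \to Q_0 \to N \to 0$. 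Each $Q_j$, viewed as a graded $A$-module in degree $k$, is a direct summand of $A_0(-k)^m$ for some $m$; since $-k > -i$ we have $A_0(-k) \in \Sl i$, hence $Q_j \in \Sl i$, and the finite resolution places $N \in \Sl i$. This base case is the only point at which the hypothesis on $A_0$ enters and is the main obstacle: without finite global dimension one would only obtain possibly infinite resolutions, which need not witness membership in the thick subcategory $\Sl i$.
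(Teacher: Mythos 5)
Your proof is correct and follows essentially the same strategy as the paper's: the thick-subcategory observation for the ``only if'' direction, and for the converse a d\'evissage first on cohomological degree, then on internal degree, reducing to a module concentrated in a single degree where the finite global dimension of $A_0$ supplies a finite resolution by shifts of $A_0$. The only cosmetic difference is that you peel off the top internal degree while the paper peels off the bottom one.
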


\begin{proof}
The full subcategory with objects those $M$ satisfying $M_{\geq i} \simeq 0$
is thick by virtue of being the kernel of an exact functor. 
Since $A_0(e)_{\geq i} = 0$ for all $e > -i$, we
see that $\Sl i$ is contained in this thick subcategory. Thus if $M$
is in $\Sl i$ we must have $M_{\geq i} \simeq 0$.

For the converse, we first assume $M$ is a module, i.e.\
concentrated in homological degree $0$, and that there is an
integer $e < i$ with $M_j = 0$ for all $j \neq e$. Since $M_e$ is a
finitely generated $A_0$-module and $A_0$ has finite global dimension, $M_e$ has
a finite projective resolution over $A_0$. Over $A$ this says that $M_e$ is
in the thick subcategory generated by $A_0(-e)$, which is contained in $\Sl i$.

Now suppose $M$ is a non-zero
finitely generated graded $A$-module with $M_{\geq i}\simeq 0$. As $M$ is finitely generated there is an integer $j$ with $M_{\geq
  j} = M$ and we may as well choose a maximal such $j$, which is necessarily less than $i$.  Consider the triangle
\[ M_{\geq j +1} \to M \to M_j \to \]
By the previous argument $M_j$ is in $\Sl i$ and arguing inductively
on the number of degrees in which $M$ is non-zero we see that $M_{\geq j + 1}$ is in $\Sl i$, and hence $M$ is in
$\Sl i$.

For an arbitrary non-zero object $M\in \Db\grA$, with $M_{\geq i}
\simeq 0$ the result follows from induction on the number of
non-vanishing cohomology modules, using the triangle
\[ M^{<n} \to M \to H^n(M)[-n] \to\]
where $n = \max \{ j \, | \, H^j(M) \neq 0 \}$ and $M^{<n}$ is the truncation with respect to the standard t-structure.
\end{proof}

\begin{rem}
 It follows from the definition that $\Sg i$ is contained in $\Db
 \grAi$. We will show in Lemma \ref{sgi-tors-lemma} that $\Sg i$ is the full subcategory of
 $\Db\grAi$ whose objects have torsion cohomology.
\end{rem}

\begin{lem}
\label{sli_admiss}
 There is a semiorthogonal decomposition
\[\Db
 \grA = \left (\Sl i, \Db \grAi \right ).\] The localization triangle
 for $M \in \Db \grA$ is given by the canonical maps
\[ M_{\geq i} \to M \to M/M_{\geq i} \to. \]
\end{lem}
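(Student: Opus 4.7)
The plan is to apply the criterion of Lemma~\ref{crit-for-so-decomp}, so we need to verify two things: first, that $\Db \grAi$ is contained in ${}^\perp \Sl i$, and second, that every $M \in \Db \grA$ fits in a triangle $B_M \to M \to A_M \to$ with $B_M \in \Db \grAi$ and $A_M \in \Sl i$.

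For the orthogonality, I would exploit the adjoint pair $\mathrm{inc} \dashv (-)_{\geq i}$ on the level of derived categories (which exists because both functors are exact, as already noted in the section). Given $N \in \Db \grAi$ and $S \in \Sl i$, adjunction gives
\[
\rHom_{\Db \grA}(N, S) \;\cong\; \rHom_{\Db \grAi}(N, S_{\geq i}).
\]
By Lemma~\ref{desc_Sl}, $S_{\geq i} \simeq 0$, so the right-hand side vanishes. Hence $\Db \grAi \subseteq {}^\perp \Sl i$.

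For the localization triangle, I would construct it at the level of complexes. The functor $(-)_{\geq i}$ is exact on $\grA$, so applied termwise to a complex $M$ it produces a subcomplex $M_{\geq i} \subseteq M$, yielding the short exact sequence $0 \to M_{\geq i} \to M \to M/M_{\geq i} \to 0$ of complexes and the corresponding triangle in $\Db \grA$. Each cohomology module of $M_{\geq i}$ is concentrated in degrees $\geq i$, so $M_{\geq i}$ lies in $\Db \grAi$. On the other hand, the termwise quotient $M/M_{\geq i}$ is concentrated (as a complex, not just in cohomology) in degrees $< i$, so $(M/M_{\geq i})_{\geq i}$ is the zero complex; Lemma~\ref{desc_Sl} then puts $M/M_{\geq i}$ in $\Sl i$.

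There is no real obstacle here: the criterion of Lemma~\ref{crit-for-so-decomp} reduces the claim to verifying the two bullets above, and in each case the key input is the characterization of $\Sl i$ provided by Lemma~\ref{desc_Sl} together with exactness of $(-)_{\geq i}$. The only point that warrants a moment's care is ensuring that $(M/M_{\geq i})_{\geq i} \simeq 0$ on the nose rather than only after passing to cohomology, but this is immediate since the construction is termwise.
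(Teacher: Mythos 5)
Your proposal is correct and follows essentially the same route as the paper: both verify the hypotheses of Lemma~\ref{crit-for-so-decomp} by using the adjunction $\operatorname{inc} \dashv (-)_{\geq i}$ together with the characterization of $\Sl i$ from Lemma~\ref{desc_Sl}, and both take the termwise triangle $M_{\geq i} \to M \to M/M_{\geq i} \to$ as the localization triangle. The extra care you take in checking that $(M/M_{\geq i})_{\geq i}$ vanishes on the nose is a fine (if brief) elaboration of what the paper leaves implicit.
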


\begin{proof}
Let $M$ be in $\Sl i$ and $N$ be in $\Db \grAi$. Then we have that
\[ \Hom {\Db \grA} N M \cong \Hom {\Db \grAi} N {M_{\geq i}} = 0\]
by right adjointness of $(-)_{\geq i}$ and since $M_{\geq i} \simeq
0$. Thus $\Db \grAi \subseteq {}^\perp
{\Sl i}$. If $M$ is any object in $\Db \grA$ we have the triangle
\[ M_{\geq i} \to M \to M/M_{\geq i} \to \]
with $M_{\geq i}$ in $\Db \grAi$ and $M/M_{\geq i}$ in $\Sl i$. Thus
we may apply
Lemma \ref{crit-for-so-decomp}.
\end{proof}

\begin{defn}
 Define $\Pl i$ to be the thick subcategory generated by the
 objects $A(e)$ for all $e > -i$ and $\Pg i$ to the thick
 category generated by $A(e)$ for all $e \leq -i$.
\end{defn}

\begin{rem}
 It follows from the definition that $\Pg i$ is contained in $\Db
 \grAi$. In fact, $\Pg i$ is the full subcategory of $\Db\grAi$ whose
 objects are perfect complexes of $A$-modules.
\end{rem}

\begin{lem}
\label{Pl_admiss}
There is a
 semiorthogonal decomposition 
\[ \Db \grA = (\Db \grAi, \Pl i).\]
\end{lem}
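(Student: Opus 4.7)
The plan is to verify the two conditions of Lemma~\ref{crit-for-so-decomp}: that $\Pl i \subseteq {}^\perp \Db\grAi$, and that every $X \in \Db\grA$ fits in a triangle $P \to X \to N \to$ with $P \in \Pl i$ and $N \in \Db\grAi$.

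The first condition is straightforward. Since $A(e)$ is graded-projective, for any $N \in \Db\grAi$ we have $\Hom{\Db\grA}{A(e)}{N[k]} \cong H^k(N)_{-e}$ for every $k$, and when $e > -i$ this vanishes because $-e < i$ while $H^k(N) \in \grAi$. The full subcategory of objects having zero Hom to a fixed $N$ is thick, so the containment extends from the generators to all of $\Pl i$.

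For the localization triangle, given $X \in \Db\grA$ I would choose a bounded-above resolution $F^\bullet \to X$ by finitely generated graded free $A$-modules with the crucial property that, for all $n$ sufficiently large, every free generator of $F^{-n}$ sits in degree $\geq i$. The hypothesis that $A_0$ has finite global dimension is essential for securing this: a graded-Nakayama style argument, likely the substance of Lemma~\ref{gregs_lemma}, pushes the minimum generator degree in a suitable resolution to infinity. Having fixed such $F^\bullet$, decompose each term as $F^n = F^n_{<i} \oplus F^n_{\geq i}$ by splitting its free basis according to whether the generator has degree $<i$ or $\geq i$. Because $A$ is positively graded, $F^n_{\geq i}$ is entirely concentrated in degrees $\geq i$; because $d$ is degree-preserving, the image of a generator of $F^n_{<i}$ lies in the degree-$<i$ part of $F^{n+1}$, and hence inside $F^{n+1}_{<i}$. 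Thus $F^\bullet_{<i}$ is a subcomplex of $F^\bullet$ with termwise quotient $F^\bullet_{\geq i}$.

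By the choice of resolution $F^\bullet_{<i}$ is bounded and each of its terms is a finitely generated graded free module with all generators in degree $<i$, so $F^\bullet_{<i}$ lies in $\Pl i$. The quotient complex has terms in $\grAi$, and the long exact cohomology sequence of $0 \to F^\bullet_{<i} \to F^\bullet \to F^\bullet_{\geq i} \to 0$, together with boundedness of the cohomology of $F^\bullet \simeq X$ and of $F^\bullet_{<i}$, forces $F^\bullet_{\geq i}$ to have bounded cohomology, so $F^\bullet_{\geq i} \in \Db\grAi$. The resulting triangle $F^\bullet_{<i} \to X \to F^\bullet_{\geq i} \to$ is the required localization triangle and Lemma~\ref{crit-for-so-decomp} delivers the semiorthogonal decomposition. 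The main obstacle in the whole proof is the construction of the special resolution: running the graded-Nakayama argument for $A_0$ a non-commutative ring merely of finite global dimension, rather than a field, is where the real technical work resides; once it is in hand, the splitting and identifications above are essentially bookkeeping.
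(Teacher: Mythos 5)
Your proposal is correct and follows essentially the same route as the paper: both check $\Pl i \subseteq {}^\perp\Db\grAi$ on the generators $A(e)$ and then produce the localization triangle by splitting a special resolution (the one from Lemma~\ref{gregs_lemma}) termwise into summands generated in degrees $<i$ and $\geq i$, exactly as in Definition~\ref{splitting_of_projectives}. The only cosmetic difference is that you speak of \emph{free} resolutions, whereas for a general non-commutative $A_0$ one must allow projectives of the form $P_0\otimes_{A_0}A(e)$; the splitting by generator degree works identically in that setting.
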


Before proving this lemma, we need two results on graded projective
$A$-modules and graded projective resolutions over $A$.

\begin{lem}
\label{lem_projdesc}
Let $P$ be a finitely generated graded projective $A$-module. Then
there is an isomorphism, for some integers $n, m_1, \ldots, m_n$,
\begin{displaymath}
P \cong \bigoplus_{i=1}^n P_i \otimes_{A_0} A(m_i)
\end{displaymath}
where the $P_i$ are projective right $A_0$-modules.
\end{lem}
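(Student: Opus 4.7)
The plan is to build an explicit isomorphism $\tilde P \cong P$ by lifting a ``basis modulo $A_{>0}$'' of $P$. Set $M := P/PA_{>0} = P\otimes_A A_0$, a finitely generated graded right $A_0$-module. First I would analyse the structure of $M$. Since $P$ is finitely generated graded projective, a standard graded-splitting argument (take any ungraded section of a graded surjection $F\twoheadrightarrow P$ from a finitely generated graded free module $F = \bigoplus_j A(\ell_j)$ and project onto its degree-zero part) writes $P$ as a graded direct summand of $F$. Tensoring with $A_0$ exhibits $M$ as a graded summand of $F\otimes_A A_0 = \bigoplus_j A_0(\ell_j)$, which is concentrated in finitely many degrees. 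Hence $M$ decomposes as $\bigoplus_{i=1}^n P_i$, where each $P_i$ is a finitely generated projective $A_0$-module sitting in a single degree $-m_i$.

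Next I would lift this decomposition to $P$. Projectivity of each $P_i$ allows an $A_0$-linear splitting of the surjection $P_{-m_i}\twoheadrightarrow M_{-m_i}=P_i$; assembling these gives a homogeneous $A_0$-linear section $s\colon M\to P$ of the canonical projection. By the extension-of-scalars adjunction, $s$ extends uniquely to a homomorphism of graded right $A$-modules
\begin{displaymath}
\tilde s \colon \bigoplus_{i=1}^n P_i \otimes_{A_0} A(m_i) \;\cong\; M \otimes_{A_0} A \;\lra\; P, \qquad m\otimes a \mapsto s(m)\cdot a.
\end{displaymath}
To finish, I would show $\tilde s$ is an isomorphism by invoking the graded Nakayama lemma twice; both applications are legitimate because $A$ is non-negatively graded and all the modules in question are finitely generated and bounded below. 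Surjectivity: $\tilde s$ reduces to $\id_M$ after applying $-\otimes_A A_0$, so $\operatorname{image}(\tilde s) + PA_{>0} = P$, whence $\operatorname{image}(\tilde s) = P$. Injectivity: since $P$ is projective, $\tilde s$ splits, and flatness of $P$ applied to the resulting short exact sequence gives $\ker(\tilde s)\otimes_A A_0 = 0$; as $\ker(\tilde s)$ is finitely generated (being a summand of $M\otimes_{A_0}A$), graded Nakayama forces $\ker(\tilde s)=0$.

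The only subtle point is the graded decomposition of $M$ into projective $A_0$-components in prescribed degrees. Because $A_0$ is not assumed local or semiperfect, one cannot simply choose a ``minimal'' homogeneous generating set; the control on degrees must instead be extracted from the embedding of $M$ into the graded free $A_0$-module $F\otimes_A A_0$. Once this is in hand, the remainder of the argument is a formal consequence of graded Nakayama and the adjunction between restriction and extension of scalars along $A_0\hookrightarrow A$.
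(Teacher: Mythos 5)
Your proof is correct and follows essentially the same route as the paper's: reduce modulo $A_{\geq 1}$ to get $\overline{P}=P\otimes_A A_0$, construct a comparison map $\overline{P}\otimes_{A_0}A\to P$ that is the identity after applying $-\otimes_A A_0$, and conclude by two applications of the graded Nakayama lemma (surjectivity, then vanishing of the split kernel). The only cosmetic difference is that you build the lift from an $A_0$-linear section using projectivity of the components of $\overline{P}$ over $A_0$, whereas the paper lifts directly using projectivity of the induced $A$-module $\overline{P}\otimes_{A_0}A$; your extra care with the single-degree decomposition of $\overline{P}$ is a point the paper leaves implicit.
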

\begin{proof}
Let $P$ be a non-zero finitely generated graded projective
$A$-module. Consider the graded projective $A_0$-module $\overline{P}
= P\otimes_A A_0$ which is non-zero 
by the graded Nakayama lemma. We obtain a graded projective $A$-module $\overline{P}\otimes_{A_0}A$ fitting into a commutative diagram
\begin{displaymath}
\xymatrix{
& \overline{P}\otimes_{A_0}A \ar[d] \ar@{-->}[dl] &\\
P \ar[r] & \overline{P} \ar[r] & 0
}
\end{displaymath}
where the vertical morphism is the canonical one and the dashed arrow exists by projectivity of $\overline{P}\otimes_{A_0}A$. By construction the morphism $\overline{P}\otimes_{A_0}A \to P$ is surjective so it splits. But
\begin{displaymath}
\overline{P}\otimes_{A_0}A \otimes_A A_0 \cong \overline{P} = P\otimes_A A_0
\end{displaymath}
and so by another application of the graded Nakayama lemma we see
$P\cong \overline{P}\otimes_{A_0}A$ is induced up 
from a graded projective $A_0$-module proving the lemma.
\end{proof}

\begin{defn}
  For every graded projective $A$-module
  $Q$, we define summands $Q_{\prec i}$ and $Q_{\succcurlyeq i}$  
	with $Q_{\prec i}$ in $\Pl i$ and
  $Q_{\succcurlyeq i}$ in $\Pg i$ via the unique up to isomorphism split exact sequence of
  graded projective modules 
\[0 \to Q_{\prec i} \to Q \to
  Q_{\succcurlyeq i} \to 0\] which exists by the previous lemma.
\label{splitting_of_projectives}
\end{defn}

The next lemma is a key technical observation concerning the structure of resolutions 
over $A$.

\begin{lem}
\label{gregs_lemma}
 Every object $M$ in $\Db \grA$ is quasi-isomorphic to a complex of
 finitely generated graded projective $A$-modules
\begin{displaymath}
P = \cdots \to P^j \to  P^{j+1} \to \cdots
\end{displaymath}
such that $P^j = 0$ for all $j \gg 0$ and for any $i\in \Z$ there exists a $k_i$ with $P^{-k}$ 
 in $\Pg i$ for all $k \geq k_i$.
\end{lem}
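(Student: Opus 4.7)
The plan is first to reduce to the case where $M$ is a single finitely generated graded $A$-module, and then to construct the resolution iteratively, using the finite global dimension of $A_0$ to force the minimum generator degrees of $P^{-k}$ to grow without bound.

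For the reduction, I would induct on the number of non-vanishing cohomology modules of $M$. With $n = \max\{j : H^j(M) \neq 0\}$, the truncation triangle $M^{<n} \to M \to H^n(M)[-n] \to$ reduces the number of non-zero cohomologies in each outer term. Given projective resolutions $P_X^\bullet$ and $P_Z^\bullet$ satisfying the conclusion for the outer terms of a triangle $X \to Y \to Z \to$, a horseshoe-style construction (lifting the connecting morphism $Z \to X[1]$ to a chain map and taking a mapping cone) would produce a resolution of $Y$ whose $(-k)$-th term is $P_X^{-k} \oplus P_Z^{-k}$; since $\Pg{i}$ is closed under direct sums, the eventually-in-$\Pg{i}$ property propagates.

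For a single module $M$ with $d_0 = \min\{j : M_j \neq 0\}$, let $g$ denote the global dimension of $A_0$. I would build $P^\bullet \to M$ iteratively: given the syzygy $K^k$ (with $K^0 := M$), take $P^{-k} = \bigoplus_j Q_j^k \otimes_{A_0} A(-j)$ with $Q_j^k$ a finitely generated projective $A_0$-module surjecting onto the degree-$j$ minimal generators $(K^k)_j / (A_{\geq 1} K^k)_j$. This is possible by $A_0$-noetherianity, and Lemma~\ref{lem_projdesc} guarantees $P^{-k}$ is finitely generated graded projective over $A$. The lemma then reduces to the key claim: for each $d$ there is $k(d)$ with $Q_d^k = 0$ for $k \geq k(d)$, whence one can take $k_i := \max\{k(d) : d_0 \leq d < i\}$.

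The key claim is established by induction on $d - d_0$. For $d = d_0$, the degree-$d_0$ strand $\cdots \to Q_{d_0}^k \to \cdots \to Q_{d_0}^0 \to M_{d_0} \to 0$ is an exact $A_0$-complex of projectives (graded pieces of exact sequences of $A$-modules are exact) whose $g$-th syzygy $(K^g)_{d_0}$ is itself $A_0$-projective by the finite global dimension hypothesis; one may therefore set $Q_{d_0}^g := (K^g)_{d_0}$ directly, obtaining $Q_{d_0}^k = 0$ for $k > g$. For $d > d_0$, after stage $k(d-1)$ the syzygy $K^k$ lives in degrees $\geq d$, so the same argument applied to the degree-$d$ tail terminates within $g+1$ further steps, giving $k(d) \leq k(d-1) + g + 2$. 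The main obstacle is exactly this termination mechanism: it is the crucial use of the finite global dimension of $A_0$, and it propagates from each degree to the next via the induction on $d - d_0$.
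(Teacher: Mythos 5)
Your proposal is correct and follows essentially the same route as the paper: reduce to a single module via cones/truncations, build the resolution from projective covers induced up from $A_0$, and use the finite global dimension of $A_0$ to show the lowest-degree strand of the resolution becomes split exact after $g$ steps, so that generator degrees are forced to climb. The paper phrases the termination as "the minimum degree of the syzygy strictly increases every $g+1$ steps" rather than your degree-by-degree induction on $d-d_0$, but the mechanism is identical.
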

\begin{proof}
It is sufficient to prove the result for graded $A$-modules as the condition 
is closed under suspensions and taking cones, and every object of 
$\Db \grA$ can be written as an iterated extension of suspensions of modules using the standard 
t-structure. Let us introduce notation local to
this proof. Given 
a finitely generated $A$-module $M$ define the integer
\begin{displaymath}
\min(M) = \min\{i\in \Z\; \vert \; M_i\neq 0\}.
\end{displaymath}

Let $M$ be a finitely generated graded $A$-module and set
\begin{displaymath}
\overline{M} = M\otimes_AA_0 = M/A_{\geq 1}M,
\end{displaymath}
which we consider as a graded $A_0$-module. We may assume $M$ has infinite projective dimension 
as the result is trivial in the finite projective dimension case. 
We will construct a projective resolution of the desired form. 
If $\overline{M}$ is zero then, by Nakayama, so is $M$ and thus we may 
suppose $\overline{M} \neq 0$. We choose an epimorphism from a graded projective 
$A_0$-module $\overline{P^0} \to \overline{M}$ by writing 
$\overline{M} \cong \oplus_{i=1}^n M_i(a_i)$,
taking epimorphisms $\overline{P^0_i}(a_i)\to M_i(a_i)$ where the $\overline{P^0_i}$ are projective 
$A_0$-modules and setting $\overline{P^0} = \oplus_{i=1}^n \overline{P^0_i}(a_i)$ with 
the obvious morphism to $\overline{M}$. This gives rise to an exact sequence of graded $A$-modules
\begin{displaymath}
0 \to Z^0 \to P^0 \to M \to 0,
\end{displaymath}
where $P^0 = \overline{P^0}\otimes_{A_0}A$, with the property that
\begin{displaymath}
\min(Z^0) \geq \min(P^0) = \min(M).
\end{displaymath}
We have assumed $A_0$ has finite global dimension, say $d$. Proceeding as above 
we may find projectives $P^i$ for $i=1,\ldots,d-1$ and exact sequences
\begin{displaymath}
0 \to Z^{i} \to P^{i} \to Z^{i-1} \to 0
\end{displaymath}
with $\min(Z^i)\geq \min(P^i) = \min(Z^{i-1})$.
Thus, restriction to the graded components in degree $j=\min(M)$ gives an exact sequence
\begin{displaymath}
0 \to Z^{d-1}_j \to P^{d-1}_j \to \cdots \to P^0_j \to M_j \to 0
\end{displaymath}
of $A_0$-modules with the $P^i_j$ projective. As $A_0$ has global dimension $d$ we see $Z^{d-1}_j$ must be projective. 
Hence $\overline{Z^{d-1}}$ can be written as $Z^{d-1}_j\oplus X$ with $X$ living in degrees strictly greater than $j$. As 
before we can pick an epimorphism $Q\to X$ from a graded projective $A_0$-module $Q$ which lives in the same 
degrees as $X$. Setting $P^{d} = (Z^{d-1}_j\oplus Q)\otimes_{A_0}A$ we get a short exact sequence
\begin{displaymath}
0 \to Z^{d} \to P^{d} \to Z^{d-1} \to 0
\end{displaymath}
with $\min(Z^d) > \min (M)$; thus our recipe guarantees projectives with 
generators in degrees less than or equal to $\min(M)$ cannot occur beyond the $d$th step 
of the resolution. We can now repeat this procedure starting at $Z^d$ to obtain a resolution satisfying 
the desired properties.
\end{proof}

\begin{rem}
It is easy to construct examples showing this lemma is no longer true
if $A_0$ does not have finite global dimension. Indeed, let $A =
k[x,y]/(x^2, y^2)$, with $|x| = 0$ and $|y| = 1$. The resolution of
$A/(x)$ is
\[ \ldots \to A \xra{x} A \xra{x}  A \to 0\]
which does not satisfy the conclusion of the previous lemma.
\end{rem}

\begin{proof}[Proof of \ref{Pl_admiss}]
Given an object $M$ in $\Db \grA$, let
$P \xra{\simeq} M$ be a quasi-isomorphism where $P$ is a complex of projectives as in the previous
lemma. Apply the decomposition \ref{splitting_of_projectives} degree-wise to $P$ to get a triangle
\[ P_{\prec i} \to P \to P_{\succcurlyeq i} \to \]
where $P_{\prec i}$ is the subcomplex of $P$ consisting of all projective
  summands generated in degrees less than $i$ and $ P_{\succcurlyeq
    i}$ is the quotient complex consisting of all projective summands
  generated in degree at least $i$.
Since $P^{-k}$ is
 in $\Pg i$ for all $k \gg 0$, we see that $P_{\prec i}$ is bounded, and
 hence in $\Pl i$. Note that $P_{\succcurlyeq i}$ has bounded finitely
 generated cohomology by the triangle, and so must be in $\Db \grAi$.

Now observe that there are no non-zero maps from objects in $\Pl i$ to
any module $M$ in $\grAi$. Thus $\grAi$ is contained in ${\Pl i}^\perp$
and hence so is $\Db \grAi$ since it is generated by $\grAi$ and ${\Pl i}^\perp$ is thick. Thus $\Pl i
\subseteq {}^\perp{\Db \grAi}$. We can now apply Lemma \ref{crit-for-so-decomp}.
\end{proof}

\begin{rem} \label{counit_pl_admiss} Let $M$ be an object of $\Db
  \grA$ and $P$ a projective resolution of
  $M$ satisfying the conditions of Lemma~\ref{gregs_lemma}. The proof
  shows that the localization triangle for $M$ is given by
\[ P_{\prec i} \to P \simeq M \to P_{\succcurlyeq i} \to .\]
\end{rem}

\begin{rem}
Although we have chosen to work throughout with the grading group $\Z$, the 
results are valid more generally. One can replace $\Z$ by any totally ordered 
abelian group and work with graded rings concentrated in degrees greater than or equal to the identity.

This will also be the case for the majority of the results that follow. However, there are instances in which one does need additional hypotheses. 
For example in Lemma~\ref{except-objects-so-decomp-lemma} (and the main theorem \ref{main-thm}) one must assume the order admits 
successors.
\end{rem}

\section{Non-commutative Proj and local cohomology}
For a graded non-commutative ring $A$, Artin and Zhang in
\cite{MR1304753} defined the
category of quasi-coherent sheaves on the non-commutative projective scheme
$\Proj A$ as the category of graded modules modulo the full
subcategory of torsion modules (here and throughout torsion means torsion with respect to the two-sided ideal $A_{\geq 1}$). In this section we recall some of their definitions and
results, in particular concerning local cohomology functors. When
$A$ is Gorenstein, these give a semiorthogonal
decomposition of $\Db \grAi$ that is a key step in the proof of Orlov's
theorem. 

We assume $A
= \bigoplus_{i \geq 0} A_i$ is a positively graded
right noetherian ring. We consider $\GrA$,
the abelian category of graded right $A$-modules. This contains $\grA$, the category of
finitely generated graded $A$-modules, as a full abelian subcategory. 

\begin{defn}
  Let $M$ be a graded $A$-module. An element $m \in M$ is \emph{torsion} if 
$$m\cdot(A_{\geq n}) = 0$$ for some $n \geq
  1$. Denote by $\tors M$ the submodule of $M$ consisting of all
  torsion elements. The module $M$ is \emph{torsion} if $\tors M = M$ and
  \emph{torsion-free} if $\tors M = 0$. Denote by $\TorsA$ the full
  subcategory of $\GrA$ consisting of torsion modules and set $\torsA = \TorsA \cap
  \grA$.
\end{defn}

The subcategory $\TorsA$ (respectively, $\torsA$) satisfies the property
that for a short exact sequence
\[ 0 \to X' \to X \to X'' \to 0\]
in $\GrA$ ($\grA$), we have $X$ in $\TorsA$ ($\torsA$) if and only
if $X'$ and $X''$ are in $\TorsA$ ($\torsA$) i.e., they are Serre 
subcategories. Moreover, $\TorsA$ is closed under colimits. Thus we can form the
quotient categories
\[ \qcoh X = \GrA/\TorsA \qquad \text{and} \qquad \coh \X = \grA /
\torsA,\]
see e.g.\ \cite[\S 4.3]{MR0340375} for the construction. The relevant
features here are that:
\begin{enumerate}
\item the categories $\qcoh X$ and $\coh X$ have the
  same objects as $\GrA$ and $\grA$, respectively;
\item the categories $\qcoh X$ and $\coh X$ are abelian and there are canonical exact functors $\GrA \to \qcoh X$ and $\grA
  \to \coh X$;
\item a map $f$ in $\GrA$ is an isomorphism in $\qcoh X$ if and only
  if $\ker f$ and $\operatorname{coker} f$ are in $\TorsA$; in particular the image
  of every object in $\TorsA$ is isomorphic to zero in $\qcoh X$. The
  analogous statement holds for $\grA$.
\end{enumerate}
For an object $M$
in $\GrA$, we denote by $\twid M$ the image of $M$ in $\qcoh X$. For
future reference, we note that as $\TorsA$ is closed under the grading
shifts, the shifts induce automorphisms of $\qcoh X$ and $\coh X$ which we also denote by $(-)(i)$.

\begin{rem}
  The notation $\qcoh X$ and $\coh X$ reflects that these categories should be thought of as
  sheaves of modules on the noncommutative projective scheme $X=\Proj
  A$. If $A$ is commutative and generated in degree 1, then by a famous result of Serre, 
  the category $\qcoh X$ (respectively $\coh X$) is equivalent to the
  category of quasi-coherent (respectively coherent) sheaves on the scheme
  $X = \Proj A$. If $A$ is generated in higher degrees, then $\coh X$
  is equivalent to the category of coherent sheaves on the
  Deligne-Mumford stack $\Proj A$.
\end{rem}

\begin{defn}
  For $M,N$ in $\GrA$, denote by $\uHom \GrA M N$ the graded abelian group
  \[\uHom {\GrA} M N = \bigoplus_{i \in \Z} \Hom {\GrA} {M(-i)} N
  \cong \bigoplus_{i \in \Z} \Hom {\GrA} {M} {N(i)}.\]
If $M$ is an $A$-$A$-bimodule, e.g.\ $M = A$, then $\uHom
{\GrA} M N$ is a graded right $A$-module and so is in $\GrA$.
\end{defn}

For any integer $p \geq 0$, we have a short exact sequence of
$A$-bimodules:
\[ 0 \to A_{\geq p} \to A \to A/A_{\geq p} \to 0.\]
Applying $\uHom \GrA - - $, we have an exact sequence of
endofunctors on $\GrA$:
\[ 0 \to \uHom \GrA {A/A_{\geq p}} - \to \uHom \GrA A - \to \uHom \GrA
{A_{\geq p}} - .\]
We may take the colimit of these sequences as $p \to \infty$ to get
another exact sequence of functors; the sequence remains exact as both the abelian 
structure and colimits for endofunctors are inherited value-wise from $\GrA$ and $\GrA$ 
has exact filtered colimits. Note
that for any $M$ in $\GrA$ we have isomorphisms in $\GrA$:
\[\displaystyle \colim_{p \to \infty} \uHom \GrA
{A/A_{\geq p}} M \cong \tors M \text{ and } \uHom \GrA A M \cong M.\] 
This gives
a functorial exact sequence
\begin{equation}
0 \to \tors M \to M \to \colim_{p \to \infty}\uHom \GrA
{A_{\geq p}} M.\label{counit-unit-for-qcoh}
\end{equation}

\begin{prop}
The inclusion of $\TorsA$ into $\GrA$ and the corresponding quotient
functor have right adjoints $\tors -$ and $\Gamma_*$, respectively:
\[\xymatrix{ \TorsA \ar@<1ex>[r]^-{\operatorname{inc}} & \GrA \ar@<1ex>[r]^-{\twid{(-)}}
  \ar@<1ex>[l]^-{\tors -} & \qcoh X
  \ar@<1ex>[l]^-{\Gamma_*}}\]
where for $M$ in $\GrA$,
$\tors M$ is the torsion submodule of $M$ and 
\[\Gamma_*(\twid M) = \displaystyle\colim_{p \to \infty}\uHom \GrA
{A_{\geq p}} M.\]

The functor $\Gamma_*$ and the inclusion of $\TorsA$ are fully faithful so the corresponding counit and unit respectively are isomorphisms. The remaining
counit and unit are given by \eqref{counit-unit-for-qcoh}.
\end{prop}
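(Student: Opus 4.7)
The plan is to establish each adjunction directly from its universal property and then identify the unit/counit maps with those appearing in \eqref{counit-unit-for-qcoh}. The torsion adjunction is essentially formal: the inclusion $\TorsA \to \GrA$ is fully faithful since $\TorsA$ is a full subcategory, so its unit is the identity. For any $T \in \TorsA$ and $M \in \GrA$, the image of a morphism $T \to M$ is a torsion submodule of $M$ and hence factors uniquely through $\tors M \into M$; this exhibits $\tors -$ as right adjoint to the inclusion, with counit $\tors M \into M$, which is the first arrow in \eqref{counit-unit-for-qcoh}.

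For the second adjunction I would use the general theory of Gabriel localization. Since $\TorsA$ is a Serre subcategory closed under colimits, the quotient functor $\twid{(-)}$ admits a right adjoint whose essential image consists of those $M \in \GrA$ that are torsion-free and satisfy $\uExt{1}{\GrA}{T}{M} = 0$ for every $T \in \TorsA$. Using that $A$ is right noetherian and that $\TorsA$ is generated as a localizing subcategory by the modules $A/A_{\geq p}$, this condition (together with torsion-freeness) is equivalent to the canonical map $M \to \colim_p \uHom{\GrA}{A_{\geq p}}{M}$ being an isomorphism. What remains is to verify that the formula given for $\Gamma_*$ actually computes this right adjoint, that is, to produce for every $N, M \in \GrA$ a natural isomorphism
\[
\Hom{\qcoh X}{\twid N}{\twid M} \;\cong\; \Hom{\GrA}{N}{\textstyle\colim_p \uHom{\GrA}{A_{\geq p}}{M}}.
\]

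The main obstacle lies in this last identification. On the left, a morphism $\twid N \to \twid M$ is represented by a roof $N \supseteq N' \xrightarrow{f} M/M''$ with $N/N'$ and $M''$ torsion, taken up to refinement. The key cofinality observation is that every such $N'$ contains $N \cdot A_{\geq p}$ for some $p$: finite generation of $N/N'$ together with noetherianness of $A$ forces each generator to be annihilated by some $A_{\geq p}$. Dually, any torsion submodule $M''$ of $M$ can be absorbed by first replacing $M$ with $M/\tors M$. Together these show that the diagrams indexed by the ideals $A_{\geq p}$ are cofinal in those computing $\Hom{\qcoh X}{\twid N}{\twid M}$, yielding the claimed bijection. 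Full faithfulness of $\Gamma_*$ is then automatic because its essential image is precisely the reflective subcategory described above, on which $\twid{(-)}$ restricts to an inverse equivalence. Finally, the remaining unit $M \to \Gamma_* \twid M$ factors as the composite $M \cong \uHom{\GrA}{A}{M} \to \colim_p \uHom{\GrA}{A_{\geq p}}{M}$, which is exactly the second map in \eqref{counit-unit-for-qcoh}, completing the identification.
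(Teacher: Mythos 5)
Your handling of the first adjunction (the inclusion of $\TorsA$ and $\tors{-}$) is correct and matches the paper. For the second adjunction your route is genuinely different from the paper's: you take existence of the section functor from abstract Gabriel localization and then try to identify it with the stated formula by computing morphisms in $\qcoh X$ via roofs and a cofinality argument. The paper never computes roofs: it shows directly that $G = \colim_{p}\uHom{\GrA}{A_{\geq p}}{-}$ vanishes on torsion modules and that modules in its image receive no nonzero maps from torsion modules (two claims using only that $A$ is right noetherian), deduces that $G$ inverts maps with torsion kernel and cokernel and hence descends to $\qcoh X$, and then verifies the adjunction and the description of the unit via the factorization criterion of \cite[4.1]{MR0340375} together with the four-term exact sequence extending the unit map, whose outer terms are torsion.

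The genuine gap is in your ``key cofinality observation.'' The proposition concerns $\GrA$, the category of \emph{all} graded right $A$-modules, and for $N$ not finitely generated it is false that every $N'\subseteq N$ with $N/N'$ torsion contains $N\cdot A_{\geq p}$ for some $p$: take $N=\bigoplus_{n\geq 1}A/A_{\geq n}$ and $N'=0$, so that $N/N'$ is torsion while $N\cdot A_{\geq p}\neq 0$ for every $p$. Your appeal to ``finite generation of $N/N'$'' is exactly where this breaks, so the subposet $\{N\cdot A_{\geq p}\}_p$ is not cofinal and the claimed bijection does not follow as stated. (This is repairable: once the right adjoint exists abstractly, $\twid{(-)}$ preserves colimits, so the identification can be checked on the finitely generated generators $A(-i)$ of $\GrA$; but that reduction is absent from your argument.) A second, smaller omission: even granting cofinality, the roof computation naturally yields $\colim_{p}\Hom{\GrA}{N\cdot A_{\geq p}}{M/\tors{M}}$, and the comparison of this with $\Hom{\GrA}{N}{\colim_{p}\uHom{\GrA}{A_{\geq p}}{M}}$ --- note the target involves $M$ rather than $M/\tors{M}$, and an internal colimit rather than a colimit of Hom-sets --- is asserted rather than proved; controlling the error terms in that comparison is precisely what the paper's two claims and its snake-lemma argument accomplish.
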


\begin{proof}
  It is easy to see that $\tors {-}$ is a right adjoint to the inclusion of
  $\TorsA \to \GrA$ and it follows from abstract nonsense, see \cite[\S
  4.4]{MR0340375}, that there exists a right adjoint $\Gamma_*\colon \qcoh
  X \to \GrA$. We give here a direct proof that the functor
  $\displaystyle \colim_{p \to \infty}\uHom \GrA
{A_{\geq p}} -$ induces a right adjoint, and that the unit is given by \eqref{counit-unit-for-qcoh}.

We first prove two claims:
\begin{claim}{1}
 If $M$ is in $\TorsA$, then $\displaystyle \colim_{p \to \infty}\uHom \GrA
{A_{\geq p}} M = 0$.
\end{claim}
\begin{claimproof}
  To see this, let $\phi$ be an element of $\uHom \GrA {A_{\geq p}}
  M$, for some $p \geq 0$. As $A$ is right noetherian, $A_{\geq p}$ is
  finitely generated as a right ideal by some $x_1, \ldots, x_k$. We can find $m \geq 0$ so
  that $\phi(x_i) \cdot A_{\geq m} = 0$ for all $i$, using that $M$ is torsion. Since $\phi(x_i \cdot A_{\geq m})
  = \phi(x_i) \cdot A_{\geq m} =0$ and, picking $m$ larger if necessary, $A_{\geq m +
    p} = (x_1, \ldots, x_k) A_{\geq m}$, we have that $\phi |_{A_{\geq m +
      p}} = 0$ and so $\phi = 0$ in $\displaystyle \colim_{p \to
    \infty}\uHom \GrA {A_{\geq p}} M$.
\end{claimproof}
\begin{claim}{2}
 There are no non-zero morphisms from torsion
modules to modules in the image of $\displaystyle \colim_{p \to \infty}\uHom \GrA
{A_{\geq p}} -$.
\end{claim}

\begin{claimproof}
 Let $T$ be in $\TorsA$ and let $g: T \to \displaystyle \colim_{p \to
  \infty} \uHom \GrA {A_{\geq p}} N$ be a map. For $x \in T$, let
$\phi \in \Hom \GrA {A_{\geq p}} N$ be a representative of $g(x)$, for
some $p$. Pick
$m$ such that $x \cdot A_{\geq m} = 0$. We have that $g(x) \cdot A_{\geq m} = g( x
\cdot A_{\geq m} ) = 0$, and that $\phi \cdot A_{\geq m}$ represents
$g(x) \cdot A_{\geq m} $. However, picking a larger $m$ if necessary, we see $\phi \cdot A_{\geq m}$ is the
image of $\phi$ under the map $\uHom \GrA {A_{\geq p}} N \to \uHom
\GrA {A_{\geq m+p}} N$ and so $\phi = 0$ in $\displaystyle \colim_{p \to
  \infty} \uHom \GrA {A/A_{\geq p}} N$, i.e.\ $g(x) = 0$. 
\end{claimproof}

To see that $\displaystyle \colim_{p \to
  \infty} \uHom \GrA {A_{\geq p}} -$ induces a functor $\Gamma_*:
\qcoh X \to \GrA$
 it is enough to show that $\displaystyle \colim_{p \to
  \infty} \uHom \GrA {A/A_{\geq p}} -$ takes morphisms $f$ with $\ker
f$ and $\coker f$ in $\TorsA$ to invertible morphisms. This follows
from Claims 1 and 2, and two applications of the snake lemma.
 To show that $\Gamma_*$ is right adjoint to the quotient and
\eqref{counit-unit-for-qcoh} is the unit, it is enough to show that
any map $f: M \to \Gamma_*( \twid N)$ factors through $M \to \Gamma_*(
\twid M)$. Note that by construction, we may extend
\eqref{counit-unit-for-qcoh} to an exact sequence
\begin{multline}
  \label{eq:ext_seq_qcoh}
    0 \to \displaystyle \colim_{p \to \infty}\uHom \GrA {A/A_{\geq p}}
    M \to M \to \\ \displaystyle \colim_{p \to \infty} \uHom \GrA {A_{\geq
        p}} M \to \displaystyle \colim_{p \to \infty} \uExt 1 \GrA {A/A_{\geq p}} M \to 0.
\end{multline}
Since $\uHom \GrA {A/A_{\geq p}} -  \cdot A_{\geq p} = 0$,
subobjects and quotients of torsion modules are torsion, and the
colimit of torsion modules is torsion, we see that the last term $\displaystyle \colim_{p \to
  \infty} \uExt 1 \GrA {A/A_{\geq p}} M$ is in $\TorsA$. To see that
the map $f: M \to \Gamma_*( \twid N) = \displaystyle \colim_{p \to
  \infty} \uHom \GrA {A_{\geq p}} N$ factors through $M \to \displaystyle \colim_{p \to
  \infty} \uHom \GrA {A_{\geq p}} M$, by \cite[4.1]{MR0340375}, it
is enough to show that there are no non-zero morphisms from torsion
modules to modules in the image of $\Gamma_*$, which was shown in
Claim 2.

We now show $\Gamma_*$ is fully faithful. Let $\eta_M: M \to \Gamma_*
\twid M$ be the unit of the adjunction, which is the center
arrow of \eqref{eq:ext_seq_qcoh}. Since the outer two terms of that
sequence are torsion, it follows that $\twid{\eta_M} $ is an isomorphism. Let $\epsilon_{\twid M}:
\twid {\left(\Gamma_* \twid M \right)} \to \twid M$ be the counit of
the adjunction. By
definition, the composition 
\[\twid M
\xra{\twid{\eta_M}} \twid {\left(\Gamma_* \twid M \right)}\xra{\epsilon_{\twid M}}
\twid M\] is the identity. Thus $\epsilon_{\twid M}$ is an isomorphism
and so $\Gamma_*$ is fully faithful.
\end{proof}

\begin{rem}
 As the notation suggests, if $A$ is a commutative ring generated in
 degree 1, then $\Gamma_*( \twid -)$ is isomorphic to $\bigoplus_{i
   \in \Z} \Gamma( \Proj A, \twid {(-)} (i))$ as they are both right adjoint to
 sheafification $\GrA \to \qcoh X$.
\end{rem}

It is clear from the definition that the functor $\tors -$ takes $\grA$ to
$\torsA$. However, $\Gamma_*$ does not necessarily take objects of $\coh X$
to $\grA$:
\begin{ex}
 Let $A = k[x]$ with $k$ a field, graded by $|x| = 1$. The $A$-module structure on $\displaystyle \colim_{p \to
  \infty} \uExt 1 \GrA {A/A_{\geq p}} A$ is easily computed: it has a $k$-basis $e_1,
\ldots, e_n, \ldots$ with $|e_i| = -i$ and $x e_i = e_{i - 1}$. In
particular it is not finitely generated over $A$ and so
from \eqref{eq:ext_seq_qcoh} we see that
$\Gamma_*(\twid A)$ is not either.
\end{ex}

In the example above, $(\Gamma_*(\twid A))_{\geq i}$ is finitely
generated (in fact of finite length) for any $i \in \Z$. Artin and Zhang gave a criterion
for $A$-modules that is equivalent to this fact being true. It is
often easy to check. For instance, it holds for all modules over
commutative rings.
\begin{defn}[Artin, Zhang]
 An object $M$ in $\grA$ satisfies $\chi_j(M)$ if there exists an integer $n_0$ such that $\uExt
 k \GrA {A/A_{\geq n}} M_{\geq i}$ is a finitely generated
 $A$-module for all $i\in \Z$, $k \leq j$ and all $n \geq n_0$. The ring satisfies condition $\chi_j$ if $\chi_j(M)$
 holds for all $M \in \grA$.
\end{defn}
If $M$ satisfies $\chi_1(M)$, then \cite[3.8.3]{MR1304753} shows that
$\displaystyle \colim_{p \to \infty} \uExt 1 \GrA {A/A_{\geq p}}
M_{\geq i}$ is a finitely generated $A$-module for all $i \in
\Z$. Thus by \eqref{eq:ext_seq_qcoh}, we see that $\Gi( \twid M ) := (\Gamma_*(\twid
M))_{\geq i}$ is finitely generated.

\begin{rem}
 As \cite[3.1.4]{MR1304753} shows, if $A$ is commutative, then every module $M$ satisfies
 $\chi_j(M)$. Indeed, we can compute the $A$-module $\Ext j \grA
 {A/A_{\geq p}} M$ using a graded free resolution of $A/A_{\geq p}$,
   which we can assume to be finite in each degree. If $A$ is not
   commutative then we must use the bimodule structure on $A/A_{\geq
     p}$ to compute the $A$-module structure on $\Ext j \grA
   {A/A_{\geq p}} M$, i.e.\ in this case we must look at the derived
   functor of $\Hom \grA {A/A_{\geq p}} -$ (rather than deriving in the first variable) and so we cannot
   necessarily use a free resolution of ${A/A_{\geq p}}$ to compute the $A$-module structure
   of $\Ext j \grA {A/A_{\geq p}} M$. In \cite{MR1291750}, an example
   is given of a non-commutative
   graded noetherian domain $A$ such that $\chi_j(A)$ does not hold
   for any $j > 0$.
\end{rem}

Recall that $\grAi$ is the full subcategory of $\grA$ with objects
those $M$ with $M = M_{\geq i}$. We denote by $\GrAi$ the analogous
subcategory of $\GrA$. Let $\TorsAi = \GrAi \cap \TorsA$ and $\torsAi
= \grAi \cap \torsA$. The functor $\tors -$ restricted to $\GrAi$
(respectively, $\grAi$) is a right adjoint of the inclusion $\TorsAi
\to \GrAi$ (respectively, $\torsAi \to \grAi$). Moreover, it is easy
to check that the composition of the functors $\GrAi \to \GrA \to
\qcoh X$ induces an equivalence $\GrAi / \TorsAi \xra{\cong} \qcoh
X$ and $\Gi = (\Gamma_*(-))_{\geq i}$ is a right adjoint to the quotient map. There is also an equivalence $\grAi/ \torsAi
\xra{\cong} \coh X$.

Assume that $A$ satisfies the condition $\chi_1$. Then, using the above,
we have the following diagram where
the vertical arrows are inclusions and the horizontal arrows form
adjoint pairs with the left adjoint on top:
\begin{equation}
\xymatrix{ \TorsAi \ar@<1ex>[rr]^{\operatorname{inc}} && \GrAi \ar@<1ex>[rr]^{\twid{(-)}}
  \ar@<1ex>[ll]^{\tors -} && \qcoh X
  \ar@<1ex>[ll]^{\Gi} \\
\torsAi \ar[u] \ar@<1ex>[rr]^{\operatorname{inc}} && \grAi \ar@<1ex>[rr]^{\twid{(-)}}
\ar[u] \ar@<1ex>[ll]^{\tors -}&&
  \ar@<1ex>[ll]^{\Gi}\coh X \ar[u].}\label{eq:abel-localiz}
\end{equation}
For any $M$ in $\GrAi$, one counit and one unit are isomorphisms and the other two are given
by
\begin{equation}
0 \to \tors M \to M \cong \uHom \GrA A M \to \displaystyle (\colim_{p
  \to \infty} \uHom \GrA {A_{\geq p}} M)_{\geq i}\label{eq:GrAi-counit-unit}
\end{equation}
as in the case of $\GrA$.
Also note that $\Gi$ is fully faithful, as it is the right adjoint of a quotient functor.

We wish to extend this diagram to functors between the bounded derived
categories of the above abelian categories. The existence of a corresponding 
localization sequence involving the derived categories is standard but we provide 
some details. We start with a simple lemma.

\begin{lem}
 Let $M$ be an object of $\GrAi$ and let $M \to I$ be an injective
 resolution in $\GrA$. Then $M \to I_{\geq i}$ is an injective
 resolution in $\GrAi$. In particular $\GrAi$ has enough injectives.
\end{lem}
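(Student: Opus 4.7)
The plan is to check two things: first, that the truncation functor $(-)_{\geq i} : \GrA \to \GrAi$ preserves injectives, and second, that it preserves the exact sequence witnessing that $M \to I$ is a resolution. Once both facts are in hand, the final clause about enough injectives is immediate since $\GrA$ has enough injectives.

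For the first point, I would appeal to the adjunction
\[
\xymatrix{ \GrAi \ar@<1ex>[r]^{\operatorname{inc}} & \GrA \ar@<1ex>[l]^{(-)_{\geq i}} }
\]
already displayed in the paper. The inclusion functor is plainly exact (kernels and cokernels in $\GrAi$ are computed degreewise, exactly as in $\GrA$), and since $(-)_{\geq i}$ is right adjoint to an exact functor it sends injectives to injectives. Concretely, for any $N \in \GrAi$ and any monomorphism $N \hookrightarrow N'$ in $\GrAi$, a lift $N' \to I_{\geq i}$ is obtained by composing the inclusion $N' \hookrightarrow N'$ in $\GrA$ with a lift $N' \to I$ and then truncating; one checks the resulting map lands in $I_{\geq i}$ and restricts correctly on $N$.

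For the second point, I would observe that $(-)_{\geq i}$ is itself exact: for a short exact sequence $0 \to M' \to M \to M'' \to 0$ in $\GrA$, exactness in degree $j$ is unaffected by discarding the components with $j < i$. Applying this exact functor to the resolution $0 \to M \to I^0 \to I^1 \to \cdots$ yields the exact sequence
\[
0 \to M_{\geq i} \to I^0_{\geq i} \to I^1_{\geq i} \to \cdots,
\]
and since $M$ already lies in $\GrAi$ we have $M_{\geq i} = M$. Combined with the first step, this shows $M \to I_{\geq i}$ is an injective resolution in $\GrAi$.

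For the final clause, given any $M \in \GrAi$ we may regard $M$ as an object of $\GrA$, choose a monomorphism $M \hookrightarrow I$ into an injective of $\GrA$, and then truncate to obtain a monomorphism $M = M_{\geq i} \hookrightarrow I_{\geq i}$ into an object which is injective in $\GrAi$ by the first step; the map is still mono because $(-)_{\geq i}$ is exact. There is no real obstacle here: the whole statement is a formal consequence of the adjunction between $\operatorname{inc}$ and $(-)_{\geq i}$ together with the exactness of both functors, and the mildest thing to verify carefully is that $(-)_{\geq i}$ really does preserve injectives, which is exactly the standard fact that a right adjoint to an exact functor sends injectives to injectives.
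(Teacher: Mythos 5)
Your proof is correct and follows essentially the same route as the paper: the paper likewise uses the adjunction $\Hom_{\operatorname{Gr}A}(\operatorname{inc}(-),J)\cong\Hom_{\operatorname{Gr}_{\geq i}A}(-,J_{\geq i})$ to see that the right adjoint $(-)_{\geq i}$ of the exact inclusion preserves injectives, and then uses exactness of $(-)_{\geq i}$ together with $M_{\geq i}=M$ to conclude. (Only a typographical slip: in your ``concrete'' verification the inclusion should be $N\hookrightarrow N'$, not $N'\hookrightarrow N'$.)
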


\begin{proof}
 The functor $(-)_{\geq i}$ is exact and $M_{\geq i} = M$, thus $M \to
 I_{\geq i}$ is a quasi-isomorphism. So to complete the proof it is sufficient to 
show $I_{\geq i}$ is a complex of injectives.

 Let $J$ be an injective object
 in $\GrA$. By adjunction there is an isomorphism of functors $\Hom {\GrA} {\operatorname{inc}(-)} {J} \cong
 \Hom {\GrAi} {-} {J_{\geq i}}$. The former functor is exact as $J$ is injective and the inclusion is exact, and thus so is the latter showing $J_{\geq i}$ is
 injective in $\GrAi$. 

Thus $(-)_{\geq i}$ preserves injectives and so the quasi-isomorphism $M\to I_{\geq i}$ is an injective resolution.
\end{proof}

The functors to the right in \eqref{eq:abel-localiz} are
exact and those to the left are left exact (since they are right
adjoints). Since $\GrAi$ has enough injectives by the above lemma and $\qcoh X$ has enough
injectives by \cite[7.1]{MR1304753} (in fact, by standard abstract nonsense both of these categories are Grothendieck categories and so have enough injectives), we may form $\R \tors -$ and $\R \Gi$, the right
derived functors of $\tors -$ and $\Gi$, respectively. This gives two
pairs of adjoint functors
\begin{equation}
\label{adjunction-big-derived-cats}
\xymatrix{ \mathsf{D}_{\TorsAi}(\GrAi) \ar@<1ex>[rr]^-{\operatorname{inc}} && \Dp \GrAi
  \ar@<1ex>[rr]^{\twid{(-)}} \ar@<1ex>[ll]^{\R \tors -} &&
  \ar@<1ex>[ll]^{\R\Gi}\Dp {\qcoh X}}
\end{equation}
where $\mathsf{D}_{\TorsAi}(\GrAi)$ is the full subcategory
of $\Dp \GrAi$ consisting of complexes with torsion cohomology.

Since $\Gi$ sends injectives to injectives and is fully faithful one checks easily that $\R\Gi$ is 
also fully faithful. In particular, we have that $\twid{(-)}$ is a quotient functor. As $\twid{(-)}$ at the level of the abelian categories is exact, 
the kernel of this functor at the level of derived categories consists of precisely those complexes whose cohomology is annihilated by $\twid{(-)}$ 
i.e., it is exactly $\mathsf{D}_{\TorsAi}(\GrAi)$. This proves the above functors give a localization sequence 
of triangulated categories.

It follows that for every $M\in \Dp \GrAi$ there is a localization triangle
\begin{equation}
\R \tors M \to M \to \R \Gi( \twid M) \to .\label{lc_tri}
\end{equation}
where the first map is the counit of the first adjunction of
\eqref{adjunction-big-derived-cats} and the second map is the unit of
the second adjunction of \eqref{adjunction-big-derived-cats}.

\begin{rems}
 Note that when $A$ is commutative, a triangle such as \ref{lc_tri} can be explicitly constructed using the
 Cech complex.
\end{rems}

For the above adjoint
pairs to restrict to the bounded derived categories of complexes
of finitely generated modules, we need to place two further
restrictions on $A$. Let $\R \Gamma_*\colon \Dp {\qcoh X} \to \Dp \GrA$ be the right derived
functor of the left exact functor $\Gamma_*$.
\newcommand{\cd}[1]{\operatorname{cd}(#1)}
\begin{defn}[Artin-Zhang]
 The \emph{cohomological dimension} of $A$ is
\[ \cd A  := \sup \{ d \, | \, H^d \R \Gamma_*( \twid A
) \neq 0 \}.\]
\end{defn} 
\noindent By \cite[7.10]{MR1304753}, if $\cd A$ is finite, then $\R \Gamma_*(\twid
M)$ is a bounded complex for every $\twid M \in \qcoh X$ and so
restricts to a functor
\[ \R \Gamma_*: \Db {\qcoh X} \to \Db \GrA.\]
Since $\Gi$ is the composition of $\Gamma_*$ and the exact
functor $(-)_{\geq i}$, we see that $\R \Gi = (\R \Gamma_*)_{\geq i}$
and so $\R \Gi$ restricts to a functor
\[\R \Gi: \Db {\qcoh X} \to \Db \GrAi.\]
By the long exact sequence in homology induced by \eqref{lc_tri}, we
see that $\R \tau$ also restricts to a functor between bounded derived
categories.

Now we consider finiteness. We want to compute the cohomology of $\R
\tors M$. We view $\tors - = \displaystyle \colim_{p \to \infty} \uHom
{\GrA} {A/A_{\geq p}} -$ as a functor from $\grAi \to \torsAi$. For
$M$ in $\grAi$, let $M \to I_M$ be an injective resolution in
$\GrA$. Then $(I_M)_{\geq i}$ is an injective resolution in $\GrAi$. Thus
we have \[\R \tors M = \colim_{p \to \infty} \uHom
{\GrA} {A/A_{\geq p}} {(I_M)_{\geq i}} = \colim_{p \to \infty} \uHom
{\GrA} {A/A_{\geq p}} {I_M}_{\geq i}\]
where the second equality follows from the commutativity of the square of inclusions
\[ \xymatrix{ \TorsAi \ar[r] \ar[d] & \GrAi \ar[d] \\
\TorsA \ar[r] & \GrA
}\]
by taking right adjoints. This shows that
 \[ H^k \R \tors M \cong \colim_{p \to \infty} \uExt k {\GrA} {A/A_{\geq p}} {M}_{\geq i}\]
for all $k \geq 0$. By \cite[3.8.3]{MR1304753}, if $M$ satisfies
$\chi_j(M)$, then $\colim_{p \to \infty} \uExt k {\grA} {A/A_{\geq p}}
{M}_{\geq i}$ is a finitely generated $A$-module for all $k \leq j$
and all $p \in \Z$. 

Assume now that $A$ has finite cohomological dimension and satisfies $\chi_j$ for all $j \geq 0$. The above
shows that $\R \tors -$ restricts to a functor
\[ \R \tors - : \Db \grAi \to \mathsf{D}^\mathsf{b}_{\torsAi}(\grAi).\]
By the long exact sequence in cohomology coming from \eqref{lc_tri},
we see that we also have a functor
\[\R\Gi: \Db {\coh X} \to \Db \grAi.\]
This gives the following diagram of adjoint functors, where the top functors are the left adjoints:
\[ \xymatrix{ \mathsf{D}^\mathsf{b}_{\torsAi}(\grAi) \ar@<1ex>[rr]^-{\operatorname{inc}} && \ar@<1ex>[ll]^-{\R
\tors -} \ar@<1ex>[rr]^-{\twid {(-)}} \Db \grAi 
 && \Db {\coh X} \ar@<1ex>[ll]^-{\R
\Gamma_{\geq i}(-)}}.\]
The functor $\R
\Gamma_{\geq i}$ is fully faithful and its image is left
admissible. Also, any object in this image is contained in $\left (
  \mathsf{D}^\mathsf{b}_{\torsAi}(\grAi) \right )^\perp$. Indeed, for
$M$ an object with torsion cohomology and any $N \in \Db \grAi$, we
have that
\[ \Hom {\Db \grAi} M {\R\Gi {\twid N}} \cong \Hom {\Db {\coh X}}
  {\twid M} {\twid N} = 0\]
since $\twid M \simeq 0$. From this containment and the triangle
\eqref{lc_tri}, we may apply \ref{crit-for-so-decomp} to see that
there is a semiorthogonal decomposition
\[ \Db \grAi = \left (\R \Gamma_{\geq i} \Db
  {\coh X}, \mathsf{D}^\mathsf{b}_{\torsAi}(\grAi) \right ).\]

Recall that $\Sg i$ is the thick subcategory generated by $A_0(e)$ for
all $e \leq -i$.
\begin{lem}
 There is an equality $\Sg i = \mathsf{D}^\mathsf{b}_{\torsAi}(\grAi).$ \label{sgi-tors-lemma}
\end{lem}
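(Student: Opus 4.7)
The plan is to prove the two inclusions separately, with the nontrivial one being $\mathsf{D}^\mathsf{b}_{\torsAi}(\grAi) \subseteq \Sg i$.

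For the easy inclusion $\Sg i \subseteq \mathsf{D}^\mathsf{b}_{\torsAi}(\grAi)$: each generator $A_0(e)$ for $e \leq -i$ is a simple torsion $A$-module (killed by $A_{\geq 1}$) concentrated in degree $-e \geq i$, hence lies in $\torsAi$. Since $\mathsf{D}^\mathsf{b}_{\torsAi}(\grAi)$ is thick in $\Db\grA$, the thick subcategory generated by these objects is contained in it.

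For the harder inclusion, the plan is to peel off cohomology, then graded pieces. First, using the standard t-structure truncation triangle
\[ M^{<n} \to M \to H^n(M)[-n] \to\]
with $n=\max\{j\mid H^j(M)\neq 0\}$, an induction on the number of non-vanishing cohomology modules reduces the claim to showing that any finitely generated torsion module $M \in \grAi$ lies in $\Sg i$. Because $M$ is finitely generated and torsion, there exists $n\geq 0$ with $M\cdot A_{\geq n}=0$; together with $M\in\grAi$ this forces $M_j=0$ outside the range $i\leq j \leq N$ for some $N$. The short exact sequences
\[ 0\to M_{\geq j+1}\to M_{\geq j}\to M_j \to 0\]
then give a finite filtration of $M$ by submodules $M_{\geq j}$ whose successive quotients are the graded pieces $M_j$, each viewed as an $A$-module concentrated in degree $j$ (with $A_{\geq 1}$ acting by zero). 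Hence it suffices to handle such single-degree modules.

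Fix $j$ with $i\leq j\leq N$ and let $N := M_j$, a finitely generated $A_0$-module viewed as an $A$-module in degree $j$. Since $A_0$ has finite global dimension, $N$ admits a finite resolution by finitely generated projective $A_0$-modules $P_\bullet$. Each $P_k$, viewed as a graded $A$-module concentrated in degree $j$, is a direct summand of a finite direct sum $A_0(-j)^{\oplus r}$, so lies in the thick subcategory generated by $A_0(-j)$. As $-j\leq -i$, this thick subcategory is contained in $\Sg i$, so $N\in\Sg i$, completing the reduction chain.

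The only real obstacle is the base case: ensuring that a finitely generated torsion module has finite graded support so that the filtration argument terminates, and that the single-degree pieces can be resolved over $A_0$ and lifted to $A$. Both are handled by the noetherian hypothesis on $A$ and the finite global dimension hypothesis on $A_0$, exactly as in Lemma~\ref{desc_Sl}; indeed the overall structure of this argument mirrors that proof, with the subcategory $\Sg i$ replacing $\Sl i$ and with torsion finitely generated modules replacing modules $M$ with $M_{\geq i}\simeq 0$.
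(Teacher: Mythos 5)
Your argument is correct and is essentially the paper's own proof: the paper observes that finitely generated torsion cohomology lives in only finitely many graded degrees and then says the claim follows ``analogously to the proof of Lemma~\ref{desc_Sl}''; you have simply written out that reduction (truncation to modules, filtration by graded pieces, finite $A_0$-projective resolution of each piece) in full detail.
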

\begin{proof}
 It's clear that $A_0(e)$ is in $\torsAi $ for all $e \leq
 -i$, so $\Sg i$ is contained in
 $\mathsf{D}^\mathsf{b}_{\torsAi}(\grAi)$. Given $M$ in
 $\mathsf{D}^\mathsf{b}_{\torsAi}(\grAi)$, we have that $H^*(M)$ is
 finitely generated and torsion, thus $M$ must have cohomology in only
 finitely many degrees. Analogously to the proof of \ref{desc_Sl}, this
 shows that $M$ is in $\Sg i$.
\end{proof}

The above shows the following:
\begin{prop}
\label{sgi_admiss}
  Let $A$ be a positively graded right
noetherian ring that satisfies condition $\chi$ and has finite
cohomological dimension. Then there is a semiorthogonal decomposition 
\[ \Db \grAi = \left (\R \Gamma_{\geq i}\Db
  {\coh X} , \Sg i \right ).\] The corresponding localization triangle is given by \eqref{lc_tri}.
\end{prop}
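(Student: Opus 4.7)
The plan is to assemble this result from the material developed in the paragraphs immediately preceding the statement; the substantive work was already carried out there, and the proposition packages it up in the cleanest form. First I would record that the hypotheses ($\cd A < \infty$ and condition $\chi$) were used precisely to guarantee that the exact/left-exact adjoint pair $\twid{(-)} \dashv \Gi$ between $\GrAi$ and $\qcoh X$, together with $\operatorname{inc} \dashv \tors{-}$, lifts to derived functors
\[ \R\Gi\colon \Db{\coh X} \to \Db\grAi \quad\text{and}\quad \R\tors{-}\colon \Db\grAi \to \mathsf{D}^{\mathsf{b}}_{\torsAi}(\grAi), \]
where the boundedness on the source of $\R\Gi$ comes from \cite[7.10]{MR1304753} and the finite generation of the cohomology of $\R\tors M$ from \cite[3.8.3]{MR1304753}.

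Next I would verify the criterion of Lemma~\ref{crit-for-so-decomp}. Full faithfulness of $\R\Gi$ follows because $\Gi$ sends injectives to injectives and is itself fully faithful at the abelian level, so the image $\R\Gi\,\Db{\coh X}$ is a left admissible subcategory of $\Db\grAi$ (its left adjoint is $\twid{(-)}$). To check orthogonality, for $M \in \mathsf{D}^{\mathsf{b}}_{\torsAi}(\grAi)$ and $N \in \Db{\coh X}$ the adjunction gives
\[ \Hom{\Db\grAi}{M}{\R\Gi \twid N} \cong \Hom{\Db{\coh X}}{\twid M}{\twid N} = 0, \]
since $\twid M \simeq 0$. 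For the existence of a localization triangle, one takes \eqref{lc_tri},
\[ \R\tors M \to M \to \R\Gi\,\twid M \to, \]
with the outer maps being the counit and the unit of the respective derived adjunctions; its leftmost term lies in $\mathsf{D}^{\mathsf{b}}_{\torsAi}(\grAi)$ and its rightmost in $\R\Gi\,\Db{\coh X}$. Lemma~\ref{crit-for-so-decomp} then yields the semiorthogonal decomposition
\[ \Db\grAi = \bigl(\R\Gi\,\Db{\coh X},\; \mathsf{D}^{\mathsf{b}}_{\torsAi}(\grAi)\bigr). \]

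To finish, I would invoke Lemma~\ref{sgi-tors-lemma}, which identifies $\mathsf{D}^{\mathsf{b}}_{\torsAi}(\grAi) = \Sg i$, and rewrite the decomposition in the stated form, with localization triangle \eqref{lc_tri}. The genuine difficulty is therefore not in this final assembly but in the preliminary step that the derived adjoint functors restrict appropriately to bounded derived categories of finitely generated modules, which is exactly where the full strength of both $\cd A < \infty$ and condition $\chi$ is used; everything else is formal manipulation of adjoints and orthogonality.
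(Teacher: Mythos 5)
Your proposal is correct and follows the paper's own argument essentially verbatim: the hypotheses are used to restrict $\R\Gi$ and $\R\tors{-}$ to the bounded finitely generated setting, orthogonality is checked via the adjunction, Lemma~\ref{crit-for-so-decomp} is applied to the triangle \eqref{lc_tri}, and Lemma~\ref{sgi-tors-lemma} identifies $\mathsf{D}^{\mathsf{b}}_{\torsAi}(\grAi)$ with $\Sg i$. The only cosmetic slip is that the left adjoint to the inclusion of $\R\Gi\,\Db{\coh X}$ is $\R\Gi\circ\twid{(-)}$ rather than $\twid{(-)}$ itself.
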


\section{Singularity category of a Gorenstein ring}
In this section we assume that $A = \bigoplus_{i \geq 0} A_i$ is a positively graded (two-sided)
noetherian ring with $A_0$ of finite global dimension, but not
necessarily commutative.

In the following, we denote by $\id_A M$ the graded injective
dimension of a graded $A$-module $M$.
\begin{defn}
\label{defn_gor}
 The ring $A$ is \emph{(Artin-Schelter) Gorenstein} if $\id_A A \li$, $\id_{A^\op} A \li$
 and 
\[\RuHom \grA {A_0} A \cong A_0[n](a) \text{ for some }
n, a \in \Z\]
in both $\Db \grA$ and $\Db \grAop$. The unique integer $a$ is the \emph{$a$-invariant} of
 $A$.
\end{defn}

\begin{rem}
In \cite{MM} a different definition of Artin-Schelter Gorenstein ring is given under the restriction that $A_0$ is a finite dimensional algebra over a fixed base field $k$. Their definition differs from ours in two ways: Minamoto and Mori require the shift occurring to match the injective dimension of $A$ i.e., $n = -\id_A A$, and that rather than $\RuHom \grA {A_0} A \cong A_0[n](a)$ one asks for an isomorphism
\begin{displaymath}
\RuHom \grA {A_0} A \cong \Hom k {A_0} k[n](a).
\end{displaymath}
We note both definitions restrict to the classical one in the case $A_0 = k$.

As an example, if $R$ is a commutative regular ring of positive Krull dimension and we set $A = R[x]/(x^n)$ with $x$ in degree $1$ then $A$ is AS-Gorenstein in our sense but not according to \cite{MM}. On the other hand the definition of Minamoto-Mori covers certain (higher) preprojective algebras which are in general excluded by our definition.
\end{rem}

From this point forward we will use the term \emph{Gorenstein ring} to refer to a ring that is Gorenstein either in the sense of Definition~\ref{defn_gor} or \cite{MM}. Our results hold for both definitions. We will work with the definition we give and, when necessary, point out what changes in the arguments are necessary if one uses the definition of Minamoto and Mori. In fact, the only place in which the arguments do not go through verbatim are Lemmas \ref{lem_chi} and \ref{lem_more_decomp} which require minor tweaking.

The most important feature of Gorenstein rings for us is the duality
given below. We will make a standard abuse of notation and not differentiate between 
the two duality functors notationally.
\begin{lem}
\label{a_inv_lem} Assume that $A$ is a
Gorenstein ring. Then the functors
\[D = \RuHom\grA - A : \Db \grA \to
    {\Db \grAop}^{\op}\]
\[D = \RuHom\grAop - A :
    \Db \grAop \to {\Db \grA}^{\op}\] are quasi-inverse equivalences.
\end{lem}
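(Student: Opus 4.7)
\emph{Proof plan.} I would show the canonical biduality map $\eta_M : M \to D(D(M))$ is an isomorphism for every $M \in \Db\grA$. By symmetry, the analogous statement holds on $\Db\grAop$, and together they give $D$ and $D$ as quasi-inverse equivalences.

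First, since $\id_A A < \infty$ and $\id_{A^\op} A < \infty$, the derived functors $\RuHom \grA - A$ and $\RuHom \grAop - A$ have bounded cohomological amplitude, so they restrict to functors $D$ between $\Db \grA$ and $(\Db \grAop)^{\op}$ as desired. The natural biduality map then defines a morphism of exact endofunctors $\eta : \Id \to D \circ D$ on $\Db \grA$.

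The subcategory $\mathcal{C} = \{M \in \Db\grA \mid \eta_M \text{ is an isomorphism}\}$ is thick. By Lemma~\ref{lem_projdesc}, every finitely generated graded projective $A$-module is a finite direct sum of modules of the form $Q \otimes_{A_0} A(m)$ with $Q$ a finitely generated projective $A_0$-module. For such a module, hom-tensor adjunction gives $D(Q \otimes_{A_0} A(m)) \cong \Hom{A_0}{Q}{A_0} \otimes_{A_0} A(-m)$, which is again finitely generated graded projective in $\grAop$; applying $D$ once more, $\eta$ reduces to the biduality $Q \to Q^{**}$ over $A_0$, an isomorphism since $Q$ is a summand of a finitely generated free $A_0$-module. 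Hence $\mathcal{C}$ contains all finitely generated graded projectives.

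By the standard $t$-structure it suffices to treat a module $M \in \grA$, and I would do so via high syzygies. Take a finitely generated projective resolution of $M$ from Lemma~\ref{gregs_lemma}, set $d = \max(\id_A A, \id_{A^\op} A)$, and let $N = \Omega^d M$. The Gorenstein hypothesis forces the higher Ext's of $N$ and of all its further syzygies into $A$ to vanish, so applying $D$ to a projective resolution of $N$ produces a projective coresolution of $D(N)$ in $\grAop$. The standard totally reflexive argument then reduces $\eta_N$ to the already-established biduality for finitely generated graded projectives, so $\eta_N$ is an isomorphism. Climbing back up the syzygy exact sequences $0 \to \Omega^{k+1}M \to P \to \Omega^k M \to 0$ and using thickness of $\mathcal{C}$ then yields $\eta_M$ as an isomorphism. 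The main obstacle is the totally reflexive step: it is here that both $\id_A A < \infty$ and $\id_{A^\op} A < \infty$ are essential, guaranteeing the existence of a projective coresolution of $D(N)$ and hence the reduction of biduality on $N$ to biduality on projectives.
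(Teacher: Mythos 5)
Your proposal is correct and follows essentially the same route as the paper: both arguments reduce to the fact that the biduality map $\eta$ is an isomorphism on finitely generated graded projectives, with the two-sided finite injective dimension of $A$ used to propagate this to all of $\Db \grA$. The paper does the propagation in one step by representing $M$ as a bounded-above complex of finitely generated projectives and observing that $\eta$ is termwise the double-dual map, whereas you run the equivalent d\'evissage through high syzygies and total reflexivity; your version makes explicit the acyclicity input that the paper's terse argument leaves implicit.
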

\begin{proof}
	We first observe that $D$ does indeed take $\Db \grA$ to $\Db \grAop^\op$. Since $A$ has finite injective dimension as both a left and a right 
	module over itself it is clear that $D$ preserves boundedness of cohomology. It is also clear that $D$ sends complexes 
	with finitely generated cohomology groups to the same as we can resolve any object of $\Db \grA$ by a complex of finitely generated projectives and 
	$A$ is noetherian.
	
  These functors are adjoint so we can consider the unit of this adjunction
	\[\eta\colon \Id \to D^2\]
	and we need to show it is an equivalence. But this is again clear: for a bounded above complex of finitely generated projectives the 
	map $\eta$ is just componentwise the natural map to the double dual and finitely generated projectives are reflexive.
	
\end{proof}

Recall that $\Pg i$ is the thick subcategory of $\Db \grA$
generated by those $A(e)$ with $e \leq -i$ and $\Pl i$ is the thick
subcategory generated by the $A(e)$ with $e > -i$.

\begin{lem}
\label{prelim-so-decomp-grAi-pgi}
 If $A$ is Gorenstein, there is a semiorthogonal decomposition
\[ \Db \grAi = \left ( \Pg i,
  ({}^\perp\Pg i) \cap \Db \grAi \right ).\]
For $M \in \Db \grAi$, the localization triangle is given by
\[ D(G)_{\prec
  i} \to M \cong D(G) \to D(G)_{\succcurlyeq i} \to\]
where the notation is as in Definition~\ref{splitting_of_projectives}, and $G \to D(M)$ is a projective resolution of the dual of $M$ as in
Lemma~\ref{gregs_lemma}.
\end{lem}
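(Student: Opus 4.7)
The strategy mirrors the proof of Lemma~\ref{Pl_admiss}, using the duality $D$ of Lemma~\ref{a_inv_lem} to transport a suitable projective resolution from the $A^{\op}$-side. Given $M \in \Db \grAi$, the plan is to pass to $D(M) \in \Db \grAop$ and apply Lemma~\ref{gregs_lemma} for $A^{\op}$ to obtain a projective resolution $G \to D(M)$ with $G^j = 0$ for $j \gg 0$ and such that, for the specific $i$ of the statement, every projective summand $A^{\op}(m)$ of $G^{-k}$ satisfies $m \leq -i$ once $k$ is sufficiently large. Since $D$ is a quasi-inverse equivalence, $D(G) \simeq M$ in $\Db \grA$, and because $D(A^{\op}(m)) \cong A(-m)$ is a projective $A$-module, $D(G)$ is a complex of graded projective $A$-modules.

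Next, I would apply Definition~\ref{splitting_of_projectives} componentwise to $D(G)$, producing a split short exact sequence of complexes
\[
0 \to D(G)_{\prec i} \to D(G) \to D(G)_{\succcurlyeq i} \to 0,
\]
which gives the triangle claimed in the lemma. The crucial observation is that for $k \gg 0$ every summand $A^{\op}(m)$ of $G^{-k}$ has $m \leq -i$, so the corresponding summand of $D(G)^k$ is $A(-m)$ with $-m \geq i$, which lies in $D(G)_{\prec i}$. Hence the tail of $G$ contributes nothing to $D(G)_{\succcurlyeq i}$, making $D(G)_{\succcurlyeq i}$ a bounded complex of projectives whose summands are of the form $A(e)$ with $e \leq -i$ -- manifestly an object of $\Pg i$. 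The complementary summand $D(G)_{\prec i}$ is bounded below (since $D(G)$ is), and its cohomology is bounded and lies in $\grAi$ by the triangle together with the thickness of $\Db \grAi$, so $D(G)_{\prec i} \in \Db \grAi$.

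The main obstacle is showing $D(G)_{\prec i} \in {}^\perp \Pg i$, since $D(G)_{\prec i}$ is in general unbounded above as a chain complex. I would handle this by noting that a bounded below complex of projectives is K-projective, so morphisms in $\Db \grA$ out of $D(G)_{\prec i}$ coincide with homotopy classes of chain maps. Given a bounded complex $Y$ whose components are of the form $A(e')$ with $e' \leq -i$, each summand $A(e)$ of $D(G)_{\prec i}^k$ satisfies $e > -i \geq e'$, so the positivity of the grading on $A$ forces $\Hom{\grA}{A(e)}{A(e')}_0 \cong A_{e'-e} = 0$. Therefore no nonzero chain map $D(G)_{\prec i} \to Y$ exists, and since $\{Y : \Hom{\Db \grA}{D(G)_{\prec i}}{Y} = 0\}$ is thick this vanishing extends to all of $\Pg i$. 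A final application of Lemma~\ref{crit-for-so-decomp} then yields the desired semiorthogonal decomposition.
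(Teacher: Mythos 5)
Your overall strategy coincides with the paper's: dualize $M$, resolve $D(M)$ over $A^{\op}$ as in Lemma~\ref{gregs_lemma}, split $D(G)$ componentwise via Definition~\ref{splitting_of_projectives}, and verify where the two pieces land. There is, however, an index slip in the key boundedness step. To force the unbounded tail of $D(G)$ into $D(G)_{\prec i}$ you need the tail summands $A^{\op}(m)$ of $G^{-k}$ to dualize to $A(-m)$ with $-m>-i$, i.e.\ $m<i$; this is Lemma~\ref{gregs_lemma} applied with the integer $-i+1$, giving $G^{-k}\in\Pg{-i+1}(A^\op)$ for $k\gg 0$. You instead impose $m\leq -i$ and conclude from $-m\geq i$ that $A(-m)$ lies in $D(G)_{\prec i}$; that implication holds only when $i>0$, so as written the argument that $D(G)_{\succcurlyeq i}$ is bounded fails for $i\leq 0$. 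The fix is immediate (the lemma supplies the tail condition for every integer simultaneously), but the correct threshold is $-i+1$, not $i$.

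The more serious problem is your proof that $D(G)_{\prec i}\in{}^\perp\Pg i$. It rests on the assertion that a bounded below complex of projectives is K-projective; this is false -- the correct statement is for bounded \emph{above} complexes ($P^n=0$ for $n\gg 0$), and a complex of projectives extending to $+\infty$ in cohomological degree need not be K-projective (acyclic such complexes need not be contractible). Consequently you cannot compute $\Hom{\Db\grA}{D(G)_{\prec i}}{Y}$ by homotopy classes of chain maps, and your chain-level vanishing, though correct, does not yet give the derived-category vanishing. A correct route is to dualize back: for $Y\in\Pg i$ one has $\Hom{\Db\grA}{D(G)_{\prec i}}{Y}\cong\Hom{\Db\grAop}{D(Y)}{G_{\succcurlyeq -i+1}}$, where $D(Y)\in\Pl{-i+1}(A^\op)$ and $G_{\succcurlyeq -i+1}\in\Db{\operatorname{gr}_{\geq -i+1}A^\op}$, so the vanishing is exactly Lemma~\ref{Pl_admiss} applied to $A^{\op}$; this is the content of the paper's appeal to $\Hom{\grA}{A(e)}{A(f)}=0$ for $e>f$. (Alternatively, one can truncate $D(G)_{\prec i}$ and use that $A(f)$ has finite injective dimension, by Gorensteinness, to reduce to a bounded, hence K-projective, complex.)
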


\begin{proof}
Let $M$ be an object of $\Db \grAi$ and let $G \to D(M)$ be a projective resolution 
as in Lemma~\ref{gregs_lemma}, where $D(M) = \R \uHom \grA M A$ is the
image of $M$ under the duality functor. As in the proof of Lemma
\ref{Pl_admiss}, there is a triangle
\[ G_{\prec -i+1} \to G \to G_{\succcurlyeq -i+1} \to
\]
where $G_{\prec -i+1}$ is an object of $\Pl {-i+1}$ and every component of $G_{\succcurlyeq -i+1}$ is generated in degree at least $-i +
1$. If $P = P_0 \otimes_{A_0} A(e)$ is any indecomposable graded
projective $A$-module, then $D(P) \cong \uHom
\grA P A \cong (P_0)^* \otimes_{A_0} A(-e)$, where $(P_0)^*$ is the $A_0$-dual of $P_0$. Thus $D(G_{\prec-i+1}) =
D(G)_{\succcurlyeq i}$ and $D(G_{\succcurlyeq -i + 1}) = D(G)_{\prec
  i}$. Applying
$D$ to the triangle above gives a triangle
\[ D(G)_{\prec
  i} \to D(G) \to D(G)_{\succcurlyeq i} \to.\]
Note that $D(G)_{\succcurlyeq i} $ is in $\Pg i$ and that there are isomorphisms
$M \xra{\simeq}
D(D(M)) \xra{\simeq} D(G)$. We can now apply Lemma
\ref{crit-for-so-decomp}, once we show that $D(G)_{\prec
  i}$ is in $({}^\perp\Pg i) \cap \Db \grAi$. It follows from the long
exact sequence in homology of the above triangle that each of the
homology groups of $D(G)_{\prec
  i}$ is generated in degrees at least $i$ and thus $D(G)_{\prec
  i}$ is in $\Db \grAi$. That $D(G)_{\prec
  i}$ is in $^\perp(\Pg i)$ follows from the fact that $\Hom {\grA}
{A(e)} {A(f)} = 0$ for $e > f$.
\end{proof}

Let us denote by $\Bi$ the subcategory $({}^\perp\Pg i) \cap \Db \grAi$,
which by the above lemma is a right
admissible subcategory of $\Db \grAi$. There is a description of $\Bi$
using the well-known
singularity category of $A$.
\begin{defn}
\label{defn_sing_cat}
 Let $A$ be a graded ring.
 \begin{enumerate}
 \item An object $M$ in $\Db \grA$ is \emph{perfect} if $M$ is in the
   thick subcategory generated by $A(e)$ for all $e \in \Z$ i.e., it is quasi-isomorphic 
	to a bounded complex of projectives. We denote
   the subcategory of perfect complexes by $\perf A$. We see from the
   definitions that $\perf A = \gend{ \Pg i, \Pl i}$.
\item The \emph{singularity category} of $A$ is
\[ \Dsg \grA := \Db \grA / \perf A.\]
 \end{enumerate}
\end{defn}

Orlov showed that when $A$ is a connected graded Gorenstein algebra over a field, there is an embedding of
$\Dsg \grA$ in $\Db \grA$ for every $i \in \Z$, and the image is equal
to $\Bi$. We now show this holds in the generality in which we are working.
First we recall a lemma whose proof is left to the reader.

\begin{lem}
\label{lem-admiss-equivs}
Let $\mathcal{A}$ be a left admissible subcategory in a triangulated
category $\T$ with $i_L: \T \to \mathcal{A}$ the left adjoint to the
inclusion $i: \mathcal{A} \to \T$. Then $i_L$ induces an equivalence
\[ \T/{}^\perp\mathcal{A} \to \mathcal{A}\]
with inverse equivalence the composition $\mathcal{A} \to \T \to \T/{}^\perp\mathcal{A}.$
The analogous statement holds for right admissible subcategories.
\end{lem}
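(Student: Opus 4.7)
The plan is to exhibit the induced functor $\overline{i_L}$, to verify in both directions that the two functors compose to the identity, and to use the localization triangle supplied by Lemma~\ref{admiss_crit} as the bridge between $\T$ and the Verdier quotient $\T/{}^\perp\mathcal{A}$. Throughout, write $q\colon \T \to \T/{}^\perp\mathcal{A}$ for the quotient functor and $F = q \circ i$ for the composition in the statement.

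First I would check that $i_L$ annihilates ${}^\perp\mathcal{A}$: if $E \in {}^\perp\mathcal{A}$ then for every $A \in \mathcal{A}$ we have $\Hom{\mathcal{A}}{i_L E}{A} \cong \Hom{\T}{E}{iA} = 0$, so Yoneda forces $i_L E = 0$. Hence $i_L$ sends every morphism with cone in ${}^\perp\mathcal{A}$ to an isomorphism, and by the universal property of the Verdier quotient it factors as $i_L = \overline{i_L} \circ q$ for a unique triangulated functor $\overline{i_L}\colon \T/{}^\perp\mathcal{A} \to \mathcal{A}$.

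Next I would verify that $\overline{i_L} \circ F \cong \id_{\mathcal{A}}$. Since $i\colon \mathcal{A} \to \T$ is fully faithful, the unit $\id_\mathcal{A} \to i_L \circ i$ of the adjunction is a natural isomorphism, so
\[
\overline{i_L} \circ F \;=\; \overline{i_L} \circ q \circ i \;=\; i_L \circ i \;\cong\; \id_{\mathcal{A}}.
\]
For the other composition, the main point (and where I expect the one bit of real work) is to show $F \circ \overline{i_L} \cong \id_{\T/{}^\perp\mathcal{A}}$, equivalently $q \circ i \circ i_L \cong q$ as functors $\T \to \T/{}^\perp\mathcal{A}$. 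By Lemma~\ref{admiss_crit}(1), every $X \in \T$ fits into a localization triangle
\[
E_X \lra X \lra D_X \lra \Sigma E_X
\]
with $E_X \in {}^\perp\mathcal{A}$ and $D_X \in \mathcal{A}$; unwinding the adjunction shows that the second map may be taken to be the counit $X \to i(i_L X)$ and hence $D_X \cong i(i_L X)$ naturally in $X$. Applying $q$ kills $E_X$ (its image is zero in the quotient), so the morphism $q(X) \to q(i(i_L X)) = F(\overline{i_L}(q(X)))$ is an isomorphism in $\T/{}^\perp\mathcal{A}$, and it is natural in $X$ because the localization triangle is functorial.

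Finally, since $q$ is essentially surjective and the natural isomorphism above shows $F \circ \overline{i_L} \cong \id$ on the image of $q$, we conclude $F \circ \overline{i_L} \cong \id_{\T/{}^\perp\mathcal{A}}$. Together with the first composition this gives the desired pair of quasi-inverse equivalences. The argument for right admissible subcategories is dual, replacing ${}^\perp\mathcal{A}$ by $\mathcal{A}^\perp$ and using Lemma~\ref{admiss_crit}(2); the only subtle step remains the identification of the localization triangle in terms of the (co)unit of the adjunction.
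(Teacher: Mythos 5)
Your proof is correct and is precisely the standard argument; the paper in fact leaves this proof to the reader, so your write-up supplies exactly what is expected. Two cosmetic points: for the adjunction $i_L \dashv i$ the natural map $X \to i(i_L X)$ is the \emph{unit} and $i_L \circ i \to \id_{\mathcal{A}}$ is the \emph{counit} (you have the names reversed in both places, though the mathematics is unaffected), and the naturality of $q(X) \to q(i(i_L X))$ should be attributed to the naturality of the unit rather than to ``functoriality of the localization triangle,'' since cones are not functorial in general.
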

Applying the above lemma to \ref{Pl_admiss} shows that there is an equivalence
\[ \psi_i\colon \Db \grA /\Pl i \xra{\cong} \Db \grAi.\]
Remark \ref{counit_pl_admiss} shows that $\psi_i(M) = P_{\succcurlyeq i}$, where $P \to M$ is a projective resolution 
as in Lemma~\ref{gregs_lemma}.
If we apply Lemma \ref{lem-admiss-equivs} again to the semiorthogonal decomposition $\Db
\grAi = (\Pg i, \Bi)$, we have an equivalence
\[ \phi_i\colon \Db \grAi / \Pg i \xra{\cong} \Bi\]
with $\phi_i(N) = D(Q)_{\prec
  i}$, where $Q \to D(N)$ is a projective resolution 
as in Lemma~\ref{gregs_lemma}.
Let us set $\bb_i = \phi_i \circ \pi \circ \psi_i$ where $\pi\colon \Db \grAi\to \Db \grAi / \Pg i$ is the quotient functor. This gives an
equivalence
\begin{equation}
\bb_i\colon \Dsg \grA = \Db \grA/\gend{ \Pl i, \Pg i} \xra{\cong} \Bi\label{defn-bbi}
\end{equation}
with $\bb_i(M) = D(Q)_{\prec
  i}$, where $Q \to D(P_{\succcurlyeq i})$ and $P \to M$ are projective
resolutions as in Lemma~\ref{gregs_lemma}. The inverse of the
equivalence is given by the composition of the inclusion and quotient $\Bi \to \Db \grA \to \Db \grA/\perf A$. Moreover,
we have that $\bb_i$ followed by the inclusion $\Bi\to \Db \grAi$ is left adjoint to the quotient functor $\Db \grAi = \Db
\grA / \Pl i \to \Db \grA/\gend{\Pl i, \Pg i} = \Dsg \grA$.

To sum up, we have shown the following:
\begin{prop}
  \label{so-decomp-grAi-sing-cat}
 If $A$ is a graded Gorenstein ring, the
 quotient $\Db \grAi \to \Dsg \grA$ has a
 fully faithful left adjoint 
\[\bb_i: \Dsg \grA \to \Db \grAi.\] The image of $\bb_i$ is the
 subcategory $\Bi = ({}^\perp\Pg i) \cap \Db \grAi$ and there is a semiorthogonal decomposition:
\[ \Db \grAi = ( \Pg i, \Bi).\]
The localization triangle is described in \ref{prelim-so-decomp-grAi-pgi}.
\end{prop}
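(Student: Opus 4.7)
The plan is straightforward because nearly everything has been assembled in the discussion preceding the statement. The semiorthogonal decomposition $\Db \grAi = (\Pg i, \Bi)$ with the explicit localization triangle is the content of Lemma~\ref{prelim-so-decomp-grAi-pgi}. The functor $\bb_i$ is already defined just before the proposition as the composite $\bb_i = \phi_i \circ \pi \circ \psi_i$, where $\psi_i \colon \Db \grA / \Pl i \xra{\cong} \Db \grAi$ and $\phi_i \colon \Db \grAi / \Pg i \xra{\cong} \Bi$ are produced by applying Lemma~\ref{lem-admiss-equivs} to the semiorthogonal decompositions of Lemma~\ref{Pl_admiss} and Lemma~\ref{prelim-so-decomp-grAi-pgi} respectively, and the identity $\perf A = \gend{\Pl i, \Pg i}$ from Definition~\ref{defn_sing_cat} identifies the source of $\bb_i$ with $\Dsg \grA$. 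By construction $\bb_i$ is an equivalence onto $\Bi$, so its essential image is as claimed and, post-composed with the fully faithful inclusion $\iota \colon \Bi \hookrightarrow \Db \grAi$, it remains fully faithful.

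The only item not yet settled is that $\iota \circ \bb_i$ is left adjoint to the quotient $q \colon \Db \grAi \to \Dsg \grA$. To check this I would fix $N \in \Dsg \grA$ and $M \in \Db \grAi$ and apply $\Hom{\Db \grAi}{\bb_i(N)}{-}$ to the localization triangle $B_M \to M \to P_M \to$ of Lemma~\ref{prelim-so-decomp-grAi-pgi}, with $B_M \in \Bi$ and $P_M \in \Pg i$. Since $\bb_i(N) \in \Bi = {}^\perp \Pg i$ and $\Pg i$ is closed under suspensions, the contributions involving $P_M$ and $\Sigma P_M$ vanish, giving
\[
\Hom{\Db \grAi}{\bb_i(N)}{M} \cong \Hom{\Db \grAi}{\bb_i(N)}{B_M}.
\]
Both objects now lie in $\Bi$, and the restriction $q|_{\Bi} \colon \Bi \to \Dsg \grA$ is the equivalence inverse to $\bb_i$; together with $q(P_M) \simeq 0$ (so $q(B_M) \cong q(M)$) this identifies the right-hand side with $\Hom{\Dsg \grA}{N}{q(M)}$, producing the adjunction naturally in $M$.

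There is no serious obstacle at this stage. The real work sits upstream, in Lemma~\ref{gregs_lemma} on graded projective resolutions and in Lemma~\ref{prelim-so-decomp-grAi-pgi}, where Gorenstein duality was used to transfer a $\Pl{-i+1}$/$\Pg{-i+1}$ splitting into the decomposition $\Db \grAi = (\Pg i, \Bi)$. The present proposition is a formal consequence of those two results combined with the abstract Lemma~\ref{lem-admiss-equivs}, plus the Hom computation above.
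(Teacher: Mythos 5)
Your proposal is correct and follows essentially the same route as the paper: the proposition is stated there as a summary of the preceding construction, namely Lemma~\ref{prelim-so-decomp-grAi-pgi} for the decomposition $\Db \grAi = (\Pg i, \Bi)$ and two applications of Lemma~\ref{lem-admiss-equivs} (to Lemmas~\ref{Pl_admiss} and~\ref{prelim-so-decomp-grAi-pgi}) to produce $\bb_i$ as an equivalence onto $\Bi$. The paper merely asserts the left-adjointness of $\iota\circ\bb_i$ to the quotient; your Hom computation with the localization triangle is the standard verification and is accurate.
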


\section{Relating the bounded derived category of coherent sheaves and the
  singularity category}
In this section we prove the main theorem by comparing the
semiorthogonal decompositions constructed in the previous sections. We
assume that $A = \bigoplus_{i \geq 0} A_i$ is a positively graded
noetherian Gorenstein ring with $A_0$ a ring
of finite global dimension, but not necessarily commutative.

Gorenstein rings often satisfy the two properties we need to apply \ref{sgi_admiss}. 
\begin{lem}
 If $A$ is a Gorenstein ring, then $A$ has finite cohomological
 dimension.
\end{lem}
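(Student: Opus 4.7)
The plan is to derive the bound directly from the hypothesis that $\id_A A < \infty$, via the localization triangle and the explicit description of local cohomology.

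First, I would unravel the definition: $\cd A = \sup\{d \mid H^d \R\Gamma_*(\widetilde{A}) \neq 0\}$, so the goal is to show the cohomology of $\R\Gamma_*(\widetilde{A})$ vanishes in high degrees. Apply the localization triangle
\[
\R\tors A \to A \to \R\Gamma_*(\widetilde{A}) \to
\]
from \eqref{lc_tri} with $M = A$. Since $A$ is concentrated in cohomological degree zero, the associated long exact sequence gives an isomorphism $H^k \R\Gamma_*(\widetilde{A}) \cong H^{k+1} \R\tors A$ for every $k \geq 1$. So it suffices to bound the cohomological amplitude of $\R\tors A$.

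Next, I would use the computation $H^k \R\tors A \cong \displaystyle \colim_{p \to \infty} \uExt^k_{\GrA}(A/A_{\geq p}, A)$ already derived in the previous section. By the Gorenstein hypothesis, $\id_A A$ is a finite integer $d$, so there is a graded injective resolution $A \to I^\bullet$ of length $d$ in $\GrA$. Consequently $\uExt^k_{\GrA}(N, A) = 0$ for every $N \in \GrA$ and every $k > d$; in particular $\uExt^k_{\GrA}(A/A_{\geq p}, A) = 0$ for all $p$ and all $k > d$. Passing to the filtered colimit, $H^k \R\tors A = 0$ for $k > d$, so $H^k \R\Gamma_*(\widetilde{A}) = 0$ for $k \geq d$, giving $\cd A \leq d - 1 < \infty$.

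No serious obstacle appears: the argument is essentially bookkeeping once one has the triangle \eqref{lc_tri} and the explicit formula for the cohomology of $\R\tors$ in hand. The only small point to be careful of is that the vanishing of Ext must take place in the graded category $\GrA$, which is exactly what $\id_A A < \infty$ (as part of Definition~\ref{defn_gor}) gives. One could equivalently phrase the conclusion as $\cd A \leq \id_A A - 1$, but only the finiteness is needed here.
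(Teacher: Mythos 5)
Your argument is correct and follows essentially the same route as the paper: both proofs use the finite injective dimension of $A$ (from the Gorenstein hypothesis) to bound the cohomological amplitude of $\R\tors A$, and then transfer this to $\R\Gamma_*(\twid A)$ via the localization triangle \eqref{lc_tri}. The paper phrases the first step more tersely as $\R\tors A = \tors I$ for a bounded injective resolution $I$, whereas you compute $H^k\R\tors A$ via the colimit of Ext groups, but the content is identical.
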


\begin{proof}
	We need to show 
	\[ \cd A  = \sup \{ d \, | \, H^d \R \Gamma_*( \twid A
	) \neq 0 \}<\infty.\]
	Since $A$ is Gorenstein 
we can choose a bounded injective resolution $I$ for $A$ as a right
$A$-module. Hence $\R\tors A = \tors I$ has bounded cohomology and the localization triangle
	\begin{displaymath}
		\R\tors A \to A \to \R\Gamma_*(\twid A) \to
	\end{displaymath}
	then implies $\R\Gamma_*(\twid A)$ also has bounded cohomology.
\end{proof}

We have remarked earlier that any commutative ring satisfies condition $\chi$. The next lemma gives some noncommutative and not necessarily graded connected examples.

\begin{lem}\label{lem_chi}
Let $k$ be a commutative ring and $A$ a flat Gorenstein $k$-algebra. Then $A$ satisfies condition $\chi$.
\end{lem}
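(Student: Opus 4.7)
The plan is first to reduce condition $\chi$ to verifying $\chi_j(A)$ for all $j$, and then to establish $\chi(A)$ by exploiting Gorenstein duality together with the flatness hypothesis. The reduction step would proceed via the long exact sequence of $\R\uHom_{\GrA}(A/A_{\geq n}, -)$ applied to resolutions of an arbitrary $M \in \grA$ by finite direct sums of shifts of $A$, combined with right-noetherianity of $A$; this is a standard Artin--Zhang-style argument in the spirit of \cite[\S 3]{MR1304753} which propagates $\chi$ from $A$ to all of $\grA$ (the class of modules satisfying $\chi_j$ for every $j$ is closed under extensions, and kernels of surjections onto $M$ from free modules are in turn governed by $\chi$ for $A$ shifted in the $\uExt$-index).

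For $\chi_j(A)$ itself, I would apply the duality $D = \R\uHom_{\GrA}(-, A)$ of Lemma~\ref{a_inv_lem} to the short exact sequence $0 \to A_{\geq n} \to A \to A/A_{\geq n} \to 0$, obtaining a triangle $D(A/A_{\geq n}) \to A \to D(A_{\geq n}) \to$ in $\Db\grAop$. Since $A$ has finite injective dimension both as a left and as a right module, $D(A/A_{\geq n})$ lies in $\Db\grAop$ with cohomological amplitude bounded uniformly in $n$. In particular, each $\uExt^k_{\GrA}(A/A_{\geq n}, A_{\geq i})$ is finitely generated as a graded \emph{left} $A$-module. However, condition $\chi$ demands finite generation as a \emph{right} $A$-module, via the bimodule action on $A/A_{\geq n}$.

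To bridge this gap, I would filter $A/A_{\geq n}$ by the powers of $A_{\geq 1}$; the successive quotients are the graded pieces $A_j$ for $0 \leq j < n$, each a $k$-flat $A_0$-$A_0$-bimodule concentrated in a single degree (here the flatness of $A$ over $k$ is used, to ensure the bimodule structures on the filtration pieces are $k$-flat and behave well under the base change implicit in swapping the two module structures). The Gorenstein isomorphism $\R\uHom_{\GrA}(A_0, A) \cong A_0[n](a)$, which by the definition of Gorenstein holds in both $\Db \grA$ and $\Db \grAop$, combined with the finite global dimension of $A_0$, shows that $\uExt^k_{\GrA}(A_j, A_{\geq i})$ is finitely generated as a right $A$-module for every $j$, $i$, $k$. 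Assembling via the long exact sequences for the filtration then yields finite generation of $\uExt^k_{\GrA}(A/A_{\geq n}, A_{\geq i})$ as a right $A$-module, uniformly in $i$ for $n$ sufficiently large, which is $\chi_j(A)$. The main obstacle is precisely this left-to-right passage: Gorenstein duality alone controls only one side, and flatness of $A$ over $k$ is what allows the bimodule structures to be handled coherently through the filtration.
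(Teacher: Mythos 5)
Your strategy diverges from the paper's at both stages, and each divergence leaves a gap. The first is the reduction of condition $\chi$ to $\chi_j(A)$. The long exact sequence coming from $0 \to K \to F \to M \to 0$ sandwiches $\uExt{k}{\GrA}{A/A_{\geq n}}{M}$ between $\uExt{k}{\GrA}{A/A_{\geq n}}{F}$ and $\uExt{k+1}{\GrA}{A/A_{\geq n}}{K}$, so controlling $M$ in cohomological degree $k$ requires controlling the syzygy $K$ in degree $k+1$, then the next syzygy in degree $k+2$, and so on: the induction never closes without some additional uniform input, and you supply none. (Note also that $A/A_{\geq n}$ cannot be resolved by projective \emph{bimodules} in general, which is why one must derive in the second variable; this blocks the usual d\'evissage.) The paper sidesteps the reduction entirely by proving directly that $\uExt{k}{\GrA}{A_0}{N}$ is finitely generated for \emph{every} finitely generated right module $N$.

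The second gap is the decisive one. The step asserting that ``the Gorenstein isomorphism combined with the finite global dimension of $A_0$'' yields finite generation of $\uExt{k}{\GrA}{A_j}{A_{\geq i}}$ as a \emph{right} $A$-module is essentially the whole content of the lemma, and no argument is given. The Gorenstein condition only controls $\RuHom{\grA}{A_0}{A}$, i.e., the case where the second argument is $A$ itself; likewise your triangle $D(A/A_{\geq n}) \to A \to D(A_{\geq n}) \to$ only sees the target $A$, not $A_{\geq i}$. For other targets one must compute $\RuHom{\grA}{A_0}{N}$ by deriving in the second variable, via an injective resolution of $N$, in order to carry the right $A$-module structure induced from the left structure on $A_0$. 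This is exactly where flatness of $A$ over $k$ earns its keep in the paper: it makes $A\otimes_k A^{\op}$ flat over $A$ and over $A^{\op}$, so an injective bimodule resolution $I$ of $A$ restricts to injective resolutions of $A$ on both sides, and one may then compute, with all bimodule structures intact,
\begin{displaymath}
\RuHom{\grA}{A_0}{N} \simeq \RuHom{\grAop}{D(N)}{D(A_0)} \simeq \uHom{\grAop}{P}{{}_\nu A_0[n](a)},
\end{displaymath}
where $P$ is a projective resolution of $D(N)$ over $A^{\op}$; the right-hand side is visibly a complex of finitely generated $A_0$-modules. Your appeal to flatness (``the bimodule structures on the filtration pieces are $k$-flat'') does not perform this function, so the left-to-right passage you correctly identify as the main obstacle remains unbridged. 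Only your final step, filtering $A/A_{\geq n}$ by the degree pieces $A_j$ and assembling via long exact sequences, matches the paper's argument.
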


\begin{proof}
As $A$ is flat over $k$ it follows that the enveloping algebra $A\otimes_k A^\op$ is flat over both $A$ and $A^\op$. Thus the restriction of scalars functors induced by the maps $A\to A\otimes_k A^\op$ and $A^\op \to A\otimes_k A^\op$ preserve injectives. Taking an injective resolution $I$ of $A$ over $A\otimes_k A^\op$ thus gives a bimodule resolution of $A$ which is an injective resolution as both a complex of left and of right $A$-modules.
	
	We may use such a resolution to compute $D = \RuHom\grA - A$ as $\uHom\grA - I$ and obtain the correct $A^\op$-module structure and similarly for the inverse duality functor; this is just a consequence of the fact that $I$ and $A$ are quasi-isomorphic as complexes of bimodules. Given a complex of injectives $M\in \Db \grA$ we now compute, using the duality of Lemma~\ref{a_inv_lem}, that there are quasi-isomorphisms of right $A_0$-modules
	\begin{align*}
		\uHom\grA {A_0} M &\cong \RuHom{\grA^\op} {\uHom\grA M I} {\uHom\grA {A_0} I} \\
		&\cong \uHom{\grA^\op} {P} {\uHom\grA {A_0} I} \\
		&\cong \uHom{\grA^\op} {P} {{}_\nu A_0[n](a)},
	\end{align*}
	where $P$ is a projective resolution of $\uHom\grA M I$ over $A^\op$ and $\nu$ is a twist by some, possibly non-trivial, automorphism which needs to be accounted for as we view ${\uHom\grA {A_0} I}$ as a bimodule rather than just a right module (see for example \cite[Lemma~2.9]{MM}). Now $\uHom{\grA^\op} {P} {\Sigma^n {}_\nu A_0(a)}$ is a complex of finitely generated $A_0$-modules and so in particular has finitely generated cohomology over $A_0$ and hence over $A$. In particular, if $M$ is an injective resolution of a right $A$ module $N$ this shows $\uExt i \grA {A_0} N$ is finitely generated over $A$ for all $i\in \Z$.
	
	It only remains to observe that $A/A_{\geq n}$ has a filtration, as bimodules, by copies of $A_0(j)$ for $j\in \Z$ and considering the corresponding long exact sequences shows $\RuHom\grA {A/A_{\geq n}} M$ has finitely generated cohomology for all $M \in \Db \grA$. Hence $A$ satisfies condition $\chi$.

\end{proof}

\begin{rem}
In the above lemma if $A$ is AS-Gorenstein in the sense of \cite{MM} then one has to replace ${}_\nu A_0(a)$ by $\Hom k {{}_\nu A_0(a)} k$ but this does not alter the argument as $\uHom{\grA^\op} {P} {\Sigma^n \Hom k {{}_\nu A_0(a)} k}$ is still a complex of finitely generated $A_0$-modules.
\end{rem}

\begin{thm}
\label{main-thm}
Let $A = \bigoplus_{i \geq 0} A_i$ be a positively graded
noetherian Gorenstein ring with $A_0$ of finite global dimension, but
not necessarily commutative. We assume in addition that $A$ satisfies condition $\chi$. Let $a$ be the $a$-invariant of $A$
defined in \ref{defn_gor}.

\begin{enumerate}
\item If $a > 0$, then for any $i \in \Z$ there is a semiorthogonal
  decomposition
\[ \Db {\coh X} = \left ( \cO(-i - a +1), \ldots, \cO(-i),
\twid{\Bi}\right ),\]
where $\cO(j)$ is the image of $A(j)$ in $\coh X$ and $\Bi$ is the
image of $\Dsg \grA$ under the fully faithful functor $\bb_i: \Dsg \grA
\to \Db \grAi$ described in \eqref{defn-bbi}.
\item If $a < 0$, then for any $i \in \Z$ there is a semiorthogonal
  decomposition
\[ \Dsg \grA = \left (  pA_0(-i), \ldots, pA_0(-i + a + 1), p
\R\Gamma_{\geq i-a} \Db
{\coh X} \right ),\]
where $p: \Db \grAi \to \Dsg \grA$ is the canonical quotient.
\item If $a = 0$, then for any $i \in \Z$ the
  functors $\twid{(-)} \bb_i: \Dsg \grA \to \Db {\coh X}$ and $p
\R\Gamma_{\geq i}: \Db {\coh
    X} \to \Dsg \grA$ are inverse equivalences.
\end{enumerate}
\end{thm}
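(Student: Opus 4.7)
The proof proceeds by comparing the two semiorthogonal decompositions of $\Db\grAi$ already established: the local-cohomology decomposition $\Db\grAi = (\R\Gamma_{\geq i}\Db{\coh X}, \Sg i)$ of Proposition~\ref{sgi_admiss} and the singularity-category decomposition $\Db\grAi = (\Pg i, \Bi)$ of Proposition~\ref{so-decomp-grAi-sing-cat}, with $\Bi \cong \Dsg\grA$ via $\bb_i$. The theorem amounts to a careful comparison of the two Verdier quotients $\Db\grAi/\Sg i \cong \Db{\coh X}$ and $\Db\grAi/\Pg i \cong \Dsg\grA$, with the $a$-invariant controlling the discrepancy between the subcategories $\Sg i$ and $\Pg i$ via Gorenstein duality.

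The essential input is a Hom computation deduced from $\RuHom{\grA}{A_0}{A} \cong A_0[n](a)$: for all $j,k$, one has $\RuHom{\grA}{A_0(-j)}{A(-k)} \cong A_0(a+j-k)[n]$, which is nonzero in exactly one cohomological degree and only in the graded degree forcing $k = j+a$. Intersecting the subcategories generated by $A(-k)$ for $k\geq i$ and $A_0(-j)$ for $j\geq i$ with their respective orthogonals in $\Db\grAi$ then yields: when $a>0$, $(\Sg i)^\perp \cap \Pg i = \langle A(-i),\ldots,A(-i-a+1)\rangle$; when $a<0$, ${}^\perp(\Pg i)\cap \Sg i = \langle A_0(-i),\ldots,A_0(-i+a+1)\rangle$; and both intersections vanish when $a=0$. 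A key corollary of the first is that $\R\Gamma_{\geq i}\twid{A(-k)} \cong A(-k)$ in $\Db\grAi$ for $k\in [i,i+a-1]$, since such $A(-k)$ lies in $(\Sg i)^\perp$ and so has no torsion correction in the local-cohomology localization triangle.

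For $a=0$, the vanishing of both intersections forces $\Bi$ and $\R\Gamma_{\geq i}\Db{\coh X}$ to coincide as subcategories of $\Db\grAi$: one checks directly from the Hom computation that $\Bi \subseteq (\Sg i)^\perp$ and $\R\Gamma_{\geq i}\Db{\coh X} \subseteq {}^\perp \Pg i$, which together with admissibility forces equality. Thus $\twid{(-)}\circ \bb_i$ and $p\circ\R\Gamma_{\geq i}$ are mutually inverse. For $a>0$, the objects $\cO(-i-a+1),\ldots,\cO(-i)$ form an exceptional collection in $\Db{\coh X}$: using $\R\Gamma_{\geq i}\twid{A(-k)}\cong A(-k)$ their Homs reduce to the graded pieces $A_{k-k'}$, vanishing for $k<k'$. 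Right-semiorthogonality of $\twid{\Bi}$ to this collection follows from the adjunction $\twid{(-)}\dashv \R\Gamma_{\geq i}$ together with $\Bi = {}^\perp \Pg i$. Lemma~\ref{build-up-of-so-decomps} then assembles everything into the claimed SOD of $\Db{\coh X}$. The case $a<0$ is symmetric: one works instead in $\Db\grA_{\geq i-a}$ and applies $p$ in place of $\twid{(-)}$, using ${}^\perp(\Pg{i-a})\cap \Sg{i-a}$ to produce the exceptional collection of $A_0$-twists inside $\Dsg\grA$.

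The main obstacle is the generation step when $a\neq 0$. For $a>0$ one must show that $\cO(-k)$ for $k\geq i+a$ already lies in the thick subcategory generated by the chosen $a$ line bundles together with $\twid{\Bi}$. This requires exploiting $\Db\grAi = \langle \Pg i,\Bi\rangle$ together with the precise way the local-cohomology correction for $A(-k)$ is killed by $\twid{(-)}$, the count of exactly $a$ missing objects emerging from the Hom computation via the $a$-invariant. Carrying this out rigorously in the noncommutative Artin--Zhang framework, without access to ordinary Matlis duality or local duality over $A_0$, is where the bulk of the technical work lies.
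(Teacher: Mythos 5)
Your overall strategy --- compare the local-cohomology and singularity-category semiorthogonal decompositions and let the $a$-invariant measure the mismatch between $\Sg i$ and $\Pg i$ --- is the right one, and several of your computations are sound (the Homs between the $\cO(-k)$ for $i\leq k\leq i+a-1$, the identification $\R\Gi\twid{A(-k)}\cong A(-k)$ for such $k$, and the vanishing $\Hom{\coh X}{\twid{\Bi}}{\cO(-k)}=0$ via the adjunction and $\Bi={}^\perp\Pg i$). But there is a genuine gap in the orthogonality statements that involve arbitrary objects of $\Bi$ or of $\R\Gi\Db{\coh X}$: these do not follow from your generator-level computation of $\RuHom{\grA}{A_0(-j)}{A(-k)}$. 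For instance, for $a=0$ you need $\Hom{}{A_0(-j)[m]}{B}=0$ for every $B\in\Bi$ and $j\geq i$; since $B$ is essentially never in the thick subcategory generated by the $A(-k)$ (that would make it perfect), the computation between generators says nothing about it. The same problem afflicts the claimed inclusion $\R\Gi\Db{\coh X}\subseteq{}^\perp\Pg i$, where $\R\Gi$ sits in the source variable so the adjunction is of no use. What is actually needed is to apply the duality functor $D=\RuHom{\grA}{-}{A}$ to whole subcategories: $D$ carries $\Sg i$ to $\Sl{-i-a+1}(A^{\op})$ and $\Pl{-i-a+1}(A)$ to $\Pg{i+a}(A^{\op})$, and the identification ${}^\perp\Sl{j}=\Db{\grA_{\geq j}}=\Pl{j}^{\perp}$ (over both $A$ and $A^{\op}$) then yields the clean statement $(\Sg i)^\perp={}^\perp\Pg{i+a}$ inside $\Db\grAi$. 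This is Lemma~\ref{lem_more_decomp}, and it is the engine of the proof; your intersections of generated subcategories are corollaries of it, not substitutes for it.

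The second gap is the generation step for $a\neq 0$, which you explicitly leave open; it is not a minor technicality. Working only inside $\Db\grAi$ you have no mechanism to show that $\cO(-k)$ for $k\geq i+a$ lies in $\gend{\cO(-i-a+1),\ldots,\cO(-i),\twid{\Bi}}$. The paper's device is to work in the ambient category $\Db\grA$ with the extra factor $\Sl i$: one has $\Db\grA=(\Sl i,\Pg i,\Bi)$ and, using Lemma~\ref{lem_more_decomp}, $\Db\grA=(\Pg{i+a},\Sl i,\R\Gi\Db{\coh X})$; for $a\geq 0$ the factors $\Pg{i+a}$ and $\Sl i$ are mutually orthogonal and may be swapped, and comparing the resulting decompositions forces the equality of subcategories $(\Pg i,\Bi)=(\Pg{i+a},\R\Gi\Db{\coh X})$. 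Generation then comes for free from uniqueness of complements in a semiorthogonal decomposition together with the refinement $\Pg i=(\Pg{i+a},A(-i-a+1),\ldots,A(-i))$ of Lemma~\ref{except-objects-so-decomp-lemma}. I would restructure your argument around the subcategory-level duality statement and the ambient category $\Db\grA$; as written, the proof does not close.
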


Before beginning the proof, we need two lemmas. For the rest of the section we rely heavily on notation introduced
earlier: recall that $\Sl i$ (respectively $\Sg i$) is the thick subcategory generated by the
 objects $A_0(e)$, for all $e > -i$ (respectively $e \leq -i$) and
 $\Pl i$ (respectively $\Pg i$) is the thick subcategory generated by the
 objects $A(e)$ for all $e > -i$ (respectively $e \leq -i$).

\begin{lem}
\label{except-objects-so-decomp-lemma}
 Let $A$ be a graded ring.
 \begin{enumerate}
 \item For any $i \in \Z$, there is a semiorthogonal decomposition
\[ \Pg i = \left ( \Pg {i+1}, A(-i) \right ). \]
\item For any $i \in \Z$, there is a
  semiorthogonal decomposition
\[ \Sl {i+1} = \left ( \Sl i, A_0(-i) \right ).\]
 \end{enumerate}
\end{lem}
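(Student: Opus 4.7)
The plan is to establish both parts by verifying the criterion of Lemma~\ref{crit-for-so-decomp}: check semiorthogonality of the two subcategories, and then produce for each object of the ambient category a localization triangle of the required form.

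For (1), I would first verify $\gend{A(-i)} \subseteq {}^\perp\Pg{i+1}$ via the elementary computation $\Hom\grA{A(-i)}{A(-j)} = A_{i-j}$, which vanishes for $j > i$ by positivity of the grading; higher Ext groups vanish since $A(-i)$ is projective. For the localization triangle, given $X \in \Pg i$ I would start from any bounded projective resolution $Q^\bullet \simeq X$ and split each $Q^n$ as in Definition~\ref{splitting_of_projectives}; the resulting short exact sequence $0 \to Q^\bullet_{\prec i} \to Q^\bullet \to Q^\bullet_{\succcurlyeq i} \to 0$ satisfies the hypotheses of the semiorthogonal decomposition of Lemma~\ref{Pl_admiss}, and since $X \in \Db\grAi$ the piece $Q^\bullet_{\prec i}$ must be acyclic, so $P^\bullet := Q^\bullet_{\succcurlyeq i}$ is a bounded projective resolution of $X$ with every $P^n \in \Pg i$. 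I would then apply Lemma~\ref{lem_projdesc} termwise to split $P^n = P^n_{[-i]} \oplus P^n_{[<-i]}$ into the summand generated in degree exactly $i$ and the summand generated in strictly higher degrees. The crucial observation is that $\Hom\grA{A(-i)}{A(-j)} = A_{i-j} = 0$ for $j > i$, forcing the component of the differential from $P^n_{[-i]}$ into $P^{n+1}_{[<-i]}$ to vanish; hence $P^\bullet_{[-i]}$ is a subcomplex, and the short exact sequence
\[ 0 \to P^\bullet_{[-i]} \to P^\bullet \to P^\bullet_{[<-i]} \to 0 \]
supplies the desired triangle, with subcomplex in $\gend{A(-i)}$ (a bounded complex of summands of finite sums of $A(-i)$) and quotient in $\Pg{i+1}$ (a bounded complex of projectives generated in degrees $>i$).

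For (2), the orthogonality $\gend{A_0(-i)} \subseteq {}^\perp\Sl i$ reduces, after a grading shift, to showing $\Ext{n}{\grA}{A_0}{A_0(m)} = 0$ for all $n \geq 0$ and $m > 0$. Applying Lemma~\ref{gregs_lemma} to $A_0$ with $\min(A_0) = 0$ produces a projective resolution $P^\bullet \to A_0$ in which every $P^{-k}$ has generators in degrees $\geq 0$; a generator of degree $d \geq 0$ must map into $A_0(m)_d = 0$ for $m > 0$, giving $\Hom\grA{P^{-k}}{A_0(m)} = 0$ and hence the desired vanishing. For the triangle, let $X \in \Sl{i+1}$, so by Lemma~\ref{desc_Sl} we have $X_{\geq i+1} \simeq 0$. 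The short exact sequence of complexes
\[ 0 \to X_{\geq i} \to X \to X/X_{\geq i} \to 0 \]
provides a candidate triangle. The quotient $X/X_{\geq i}$ has all components in degrees strictly below $i$, hence lies in $\Sl i$ by Lemma~\ref{desc_Sl}. The subcomplex $X_{\geq i}$ has cohomology $H^n(X)_{\geq i}$ concentrated entirely in graded degree $i$, so each $H^n(X_{\geq i})$ is a finitely generated $A_0$-module placed in degree $i$; since $A_0$ has finite global dimension such modules lie in $\gend{A_0(-i)}$, and a d\'evissage along the standard t-structure places $X_{\geq i}$ itself in $\gend{A_0(-i)}$.

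The main obstacle is the grading bookkeeping in (1): one must identify which side of the splitting of $P^\bullet$ constitutes a subcomplex rather than a quotient, a determination forced by the direction in which $\Hom\grA{A(e)}{A(e')}$ fails to vanish. Once this is correctly set up, both parts reduce to routine applications of Lemma~\ref{crit-for-so-decomp}.
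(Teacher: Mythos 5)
Your proposal is correct and takes essentially the same route as the paper: part (1) is proved by splitting a bounded complex of projectives with all terms in $\Pg i$ into the subcomplex of summands generated in degree exactly $i$ and the quotient generated in degrees $\geq i+1$, and part (2) uses the truncation triangle $X_{\geq i} \to X \to X/X_{\geq i}$ together with the same orthogonality computations, both feeding into Lemma~\ref{crit-for-so-decomp}. The additional care you take in justifying that an object of $\Pg i$ has a bounded resolution with every term in $\Pg i$, and in invoking Lemma~\ref{gregs_lemma} for the $\operatorname{Ext}$-vanishing, only makes explicit what the paper leaves implicit.
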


\begin{proof}
It is clear that $A(-i) \subseteq {}^{\perp}\Pg {i+1}$. We know that any object in $\Pg i$ is
isomorphic in $\Db \grAi$ to a bounded complex $X$ of finitely generated
graded projective modules and so we may restrict ourselves to working with 
such complexes. As in the proof of \ref{Pl_admiss}, using the structure of graded projectives
given in \ref{lem_projdesc} and the notation of Definition~\ref{splitting_of_projectives}, we see that there is a short exact sequence of
complexes, split in each degree
\[ 0 \to X_{\prec i+1} \to X \to  X_{\succcurlyeq i+1} \to 0\]
where $X_{\prec i+1}$ is the subcomplex of $X$ which is termwise the projective
summands generated in degree $i$ and $X_{\succcurlyeq
  i+1}$ is the quotient complex which is termwise all those projective summands generated
in degree at least $i+1$. This gives a triangle
\[ X_{\prec i+1} \to X \to  X_{\succcurlyeq i+1} \to \]
with $X_{\prec i+1}$ in the thick subcategory generated by $A(-i)$ and
$X_{\succcurlyeq i+1}$ in $\Pg {i+1}$. By Lemma \ref{crit-for-so-decomp} we have proved
part 1.

We have that $A_0(-i) \in {}^\perp\Sl i$, since $\R \Hom \grA {A_0(e)} {A_0(f)} \simeq
0$ for all $e < f$. Indeed, we may find a graded free resolution of
$A_0(e)$ that exists entirely in degrees at least $e$. For any $X \in
\Sl {i + 1}$, we have the triangle
\[ X_{\geq i} \to X \to X/X_{\geq i} \to\]
as in Lemma~\ref{sli_admiss}. Since $X_{\geq i+1}=0$ we see $X_{\geq i}$ has cohomology concentrated in grading degree $i$ and so is in the thick subcategory generated by $A_0(-i)$. On the other hand $X/X_{\geq i}$ is killed by $(-)_{\geq i}$ so is in $\Sl i$ by Lemma~\ref{desc_Sl}. Applying Lemma \ref{crit-for-so-decomp} now proves
part 2.
\end{proof}

For the sake of clarity we introduce the following notation for the next lemma. We denote by $\Sl i(A)$ and $\Sl i(A^\op)$ the thick subcategories generated by the $A_0(e)$, for all $e > -i$, in $\Db \grA$ and $\Db \grAop$ respectively. We use similar notation for $\Sg i$, $\Pg i$, and $\Pl i$ in order to indicate in which category we are working. 

\begin{lem}\label{lem_more_decomp}
 Under the hypothesis of Theorem~\ref{main-thm}, we have
${{\Sg i}^\perp =
 {}^\perp{\Pg {i+a}}}$
as subcategories of $\Db \grAi.$
\label{lem-sg-equals-pg}
\end{lem}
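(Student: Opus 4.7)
The plan is to apply the Gorenstein duality $D$ of Lemma~\ref{a_inv_lem} to transfer the claim into $\Db \grAop$, where it will follow directly from the analogues of Lemmas~\ref{sli_admiss} and~\ref{Pl_admiss} for $A^\op$.

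First I will compute the images of $\Sg i(A)$ and $\Pg{i+a}(A)$ under $D$. Using the Gorenstein isomorphism $\RuHom \grA {A_0} A \cong A_0[n](a)$ in $\Db \grAop$ together with the identity $\uHom \grA {M(g)} N \cong \uHom \grA M N (-g)$, one finds $D(A_0(e)) \cong A_0[n](a-e)$ and $D(A(f)) \cong A(-f)$ in $\Db \grAop$. Taking thick closures and setting $j := 1-i-a$, the conditions $e \leq -i$ and $f \leq -(i+a)$ defining the generators of $\Sg i(A)$ and $\Pg{i+a}(A)$ translate, under $D$, into shifts exceeding $-j$ in each case. Hence $D(\Sg i(A)) = \Sl j(A^\op)$ and $D(\Pg{i+a}(A)) = \Pl j(A^\op)$ in $\Db \grAop$.

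Next, the analogues of Lemmas~\ref{sli_admiss} and~\ref{Pl_admiss} applied to the ring $A^\op$ produce two semiorthogonal decompositions of $\Db \grAop$ whose common ``middle'' admissible piece is the full subcategory $\mathcal{C} \subseteq \Db \grAop$ of complexes with cohomology concentrated in gradings $\geq j$:
$$\Db \grAop = (\Sl j(A^\op), \mathcal{C}) \qquad \text{and} \qquad \Db \grAop = (\mathcal{C}, \Pl j(A^\op)).$$
Reading off the orthogonals from these two decompositions, one obtains ${}^\perp \Sl j(A^\op) = \mathcal{C} = \Pl j(A^\op)^\perp$ as subcategories of $\Db \grAop$.

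Finally, because $D$ is a contravariant equivalence it interchanges left and right orthogonals, giving $D({\Sg i}^\perp) = {}^\perp D(\Sg i) = {}^\perp \Sl j(A^\op)$ and $D({}^\perp \Pg{i+a}) = D(\Pg{i+a})^\perp = \Pl j(A^\op)^\perp$. These agree by the previous step, so applying $D$ once more yields ${\Sg i}^\perp = {}^\perp \Pg{i+a}$ in $\Db \grA$; intersecting with $\Db \grAi$ gives the stated equality. The main subtlety, and the only place where the $a$-invariant enters, is the shift $(a-e)$ produced by $D(A_0(e))$: this is precisely what forces $\Sg i$ and $\Pg{i+a}$ to be sent to subcategories of $\Db \grAop$ generated by objects indexed by the same set of shifts, and hence having matching orthogonals.
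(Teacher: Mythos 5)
Your proof is correct and follows essentially the same route as the paper: both apply the Gorenstein duality $D$ of Lemma~\ref{a_inv_lem} to the generators to identify $D(\Sg i)$ and $D(\Pg{i+a})$ with $\Sl{-i-a+1}(A^\op)$ and $\Pl{-i-a+1}(A^\op)$, invoke ${}^\perp\Sl j(A^\op) = \Db{\operatorname{gr}_{\geq j}A^\op} = \Pl j(A^\op)^\perp$ from Lemmas~\ref{sli_admiss} and~\ref{Pl_admiss}, and conclude using that a contravariant equivalence with $D^2\cong\Id$ interchanges left and right orthogonals. The index bookkeeping with $j = 1-i-a$ checks out.
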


\begin{proof}
As $A$ is Gorenstein we have Grothendieck duality by Lemma~\ref{a_inv_lem}. We note that restricting the duality functor $D = \RuHom \grA - A $ to $\Sg i$
gives an equivalence
\[D\colon \Sg i(A) \xra{\cong} \left ( \Sl {-i - a +1}(A^\op) \right )^{\,\op}.\]
Indeed, one can see this simply by computing $D$ applied to the generators and observing equivalences send 
thick subcategories to thick subcategories.
Similarly we can also
restrict $D$ to get an equivalence
\[ D\colon \Pl {-i - a + 1}(A) \xra{\cong} \left(\Pg {i+a}(A^\op)\right)^{\, \op}.\]
By \ref{sli_admiss}, \ref{Pl_admiss}, and the definition of a
semi-orthogonal decomposition, we have in $\Db \grA$
\[^\perp\Sl {-i - a +1}(A) = \Db {\grA_{\geq -i-a+1}} = \Pl {-i - a +1}(A)^\perp.\] We thus have
\begin{align*}
  \Sg i(A)^\perp& \xra{\cong} \left ( \left( \Sl {-i - a +1}(A^\op) \right
    )^{\,\op}\right )^\perp = \left ( {}^\perp\Sl {-i - a +1}(A^\op) \right
  )^{\,\op}
= \left ( \Pl {-i - a +1}(A^\op)^\perp \right )^{\, \op} \\ &= ^\perp\!\!\left (
  \left ( \Pl {-i - a +1}(A^\op) \right )^{\, \op} \right ) \xra{\cong}
  {}^\perp\Pg {i+a}(A),
\end{align*}
i.e.\ the functor $D^2$, which is isomorphic to the identity functor, takes $\Sg i(A)^\perp$ to
$^\perp \Pg {i+a}(A)$ and hence these categories are equal.
\end{proof}

\begin{rem}
If $A$ is AS-Gorenstein in the sense of \cite{MM} then one needs a minor additional argument to prove the above lemma. We need to check $D(\Sg i(A))$, the thick subcategory of $\Db \grAop^\op$ generated by the $D(A_0(e))$ for $e\leq -i$, is $(\Sl {-i - a +1}(A^\op))^{\,\op}$. By definition
\begin{displaymath}
\RuHom {\grA} {A_0(e)} A \cong \Hom k {A_0} k[-n](-e+a)
\end{displaymath}
and it is sufficient to check this object generates the same thick subcategory as $A_0(-e+a)$ (of course we can ignore the degree shift). This follows essentially immediately from the equivalence
\begin{displaymath}
\Hom k - k \colon \Db {\mathrm{mod}\;A_0} \xra{\cong} \Db {\mathrm{mod}\;A_0^\op}^\op
\end{displaymath}
which sends the generator $A_0$ to $\Hom k {A_0} k$.
\end{rem}

\begin{proof}[Proof of Theorem \ref{main-thm}]
 Combining the decompositions of \ref{sli_admiss}, \ref{so-decomp-grAi-sing-cat} via \ref{build-up-of-so-decomps} there is a semiorthogonal
 decomposition
\begin{equation} 
\label{so-decomp-grA-using-dsg}
\Db \grA = \left (\Sl i, \Pg i, \Bi \right ).
\end{equation}
Similarly, by \ref{sli_admiss}, \ref{sgi_admiss} and \ref{build-up-of-so-decomps},
there is a semiorthogonal decomposition
\begin{equation*}
 \Db \grA = \left ( \Sl i, \R\Gamma_{\geq i} \Db {\coh X}, \Sg i
 \right ).
\end{equation*}
Using Lemma \ref{lem-sg-equals-pg}, we see that $^\perp{\Pg {i+a}} =
{\Sg i}^\perp = \left ( \Sl i,
  \R\Gamma_{\geq i} \Db {\coh X} \right )$, and thus there is a semiorthogonal
decomposition
\begin{equation}
\label{so-decomp-grA-using-cohX}
\Db \grA = \left (\Pg {i+a}, \Sl i, \R\Gamma_{\geq i} \Db {\coh X}
\right ).
\end{equation}

The rest of the proof boils down to comparing the decompositions
\eqref{so-decomp-grA-using-dsg} and \eqref{so-decomp-grA-using-cohX},
depending on the sign of $a$.

Assume first that $a \geq 0$. Then $\Pg {i+a} \subseteq \Db \grAi$ by
definition, and $\Db \grAi =
{}^\perp \Sl i$ by \ref{sli_admiss}.
Hence the first two factors of \eqref{so-decomp-grA-using-cohX} are mutually orthogonal and we may swap them to get
\begin{equation}
  \label{so-decomp-when-a-positive}
  \Db \grA = \left (\Sl i,\Pg {i+a}, \R\Gamma_{\geq i} \Db {\coh
      X}\right ).
\end{equation}
Comparing with \eqref{so-decomp-grA-using-dsg} we see that
\begin{displaymath}
 \Db \grAi = \left (\Pg i, \Bi \right ) = \left (\Pg {i+a},
   \R\Gamma_{\geq i} \Db {\coh X} \right ).
\end{displaymath}
By \ref{except-objects-so-decomp-lemma} there
is a decomposition
\[ \Pg i = \left (\Pg {i+a}, A(-i -a +1), \ldots, A(-i + 1), A(-i) \right ) . \]
It follows there is an equality in $\Db \grAi$:
\begin{displaymath}
 \left ( A(-i -a + 1), \ldots, A(-i + 1), A(-i),  \Bi \right ) = \R\Gamma_{\geq i} \Db {\coh X}.
\end{displaymath}
Applying
$\twid{(-)}$ to both sides gives the semiorthogonal decomposition
\[ \Db {\coh X} = \left ( \cO(-i-a+1), \ldots, \cO(-i), \twid{\Bi}
\right ).\] 

Assume now that $a \leq 0$. In this case, $\Pg i \subseteq {\Sl
i}^{\perp}$, i.e.\ $\Hom {\Db \grA} {\Sl i}
{\Pg i} = 0$. To see this, it is enough to check that $\Hom {\Db
  \grA} {A_0(e)[m]} {A(f)} = 0$ for all $e > -i, f \leq -i$ and all
$m \in \Z$. But we have that
\begin{align*}
  \Hom {\Db \grA} {A_0(e)[m]} {A(f)} &\cong \Hom {\Db \grA} {A_0}
  {A}(f-e)[-m]\\
 &= \left ( H^{-m} \R\uHom \grA {A_0} {A} \right )_{f-e}.
\end{align*}
By the definition of Gorenstein, this is nonzero if and only if $-m =
n$ and $f - e = -a$, however $f - e < 0$ and $-a \geq 0$. Thus we may
switch the order of the first two factors of
\eqref{so-decomp-grA-using-dsg} and we have a semi-orthogonal
decomposition
\begin{equation} 
\label{so-decomp-grA-using-dsg-switched}
\Db \grA = (\Pg i, \Sl i, \Bi).
\end{equation}
We also have, substituting $i - a$ for $i$ in \eqref{so-decomp-grA-using-cohX},
\begin{displaymath}
  \label{so-decomp-grA-using-cohX-a}
\Db \grA = (\Pg {i}, \Sl {i-a}, \R\Gamma_{\geq i-a} \Db {\coh X}).
\end{displaymath}
This shows that $(\Sl i, \bb \Dsg \grA) = (\Sl {i-a}, \R\Gamma_{\geq
  i-a} \Db {\coh X})$. By \ref{except-objects-so-decomp-lemma} we have
$\Sl {i - a} = \left (\Sl i, A_0(-i), A_0(-i - 1),
\ldots, A_0(-i + a + 1)\right )$. Thus we have
\[ \Bi = \left (A_0(-i), A_0(-i - 1),
\ldots, A_0(-i + a + 1), \R\Gamma_{\geq
  i-a} \Db {\coh X} \right )\]
and applying the functor $p: \Db \grAi \to \Dsg \grA$ gives the desired decomposition.
\end{proof}

\section{Complete intersection rings and matrix factorizations}
In this section we apply the main theorem to relate the derived
category of a commutative complete intersection ring to the homotopy
category of graded matrix
factorizations over a ``generic hypersurface.''

\subsection{Graded matrix factorizations}
Let $S = \bigoplus_{i \geq 0} S_i$ be a commutative noetherian graded
ring and let $W \in S_d$, for some $d \geq 1$. A
\emph{graded matrix factorization} of $W$ is a pair of graded projective $S$-modules
$E_1, E_0$ and morphisms in $\grS$,
\[ e_1: E_1 \to E_0 \qquad e_0: E_0 \to E_1(d) \]
such that $e_0 e_1 = W\cdot 1_{E_1}$ and $e_1(d) e_0 = W \cdot
1_{E_0}$. A morphism $h$ between
$\bE = \emf d$ and $\mathbb{F} = \fmf d$ is a pair of
maps $h_1: E_1 \to F_1$ and $h_0: E_0 \to F_0$ making the obvious
diagrams commute. One defines a homotopy between two such maps
analogously to the case of a map of complexes. The category with
objects graded matrix factorizations of $W$ and morphisms homotopy
equivalence classes of morphisms of matrix factorizations is called
the \emph{homotopy category of matrix factorizations} and denoted
$\hgrmf$.

Now assume that $S_0$ is a regular commutative ring and $S$  is a
polynomial ring over $S_0$. Set $A = S/(W)$ and consider the
singularity category $\Dsg {\grA}$ as defined in \ref{defn_sing_cat}. The assignment that sends
$\bE = \emf d$ to the image of $\coker e_1$ in $\Dsg {\grA}$ induces a
functor
\begin{equation}
\label{gr_mf_sing_equiv} \coker: \hgrmf \to \Dsg {\grA}.
\end{equation}
It follows from work of
Eisenbud \cite{Ei80} and and Buchweitz \cite{Bu87}, but seems to have
first been written down by Orlov in \cite[\S 3]{MR2641200} that this
functor is an equivalence of categories.

\subsection{Generic hypersurface}
Let $R = Q/(\bf)$, where $Q$ is a commutative regular ring of finite
Krull dimension, and $\bf =
f_1, \ldots, f_c$ is a $Q$-regular sequence. Define $S = Q[T_1, \ldots,
T_c]$ to be the graded polynomial ring over $Q$ with $|T_i| = 1$. Let $W = f_1T_1 +
\ldots + f_c T_c \in S_1$ and set $A = S/(W)$.

Let $Y = \Proj A$ and note that there is a diagram
\begin{equation} \label{E621}
\xymatrix{\psc R = \Proj {\left (S \otimes_Q R \right )} \ar[r]^-{\RY} \ar[d]_{\pi}& Y \ar[r] &
  \Proj S = \psc Q \ar[d] \\
 \Spec R \ar[rr] & & \Spec Q} 
\end{equation}
where the vertical arrows are the canonical proper maps
and each horizontal arrow is a regular closed immersion and thus has
finite Tor dimension. In particular the map $\RY: \psc R \to Y$ is
a regular closed immersion of codimension $c-1$. Orlov used this
setup in \cite{MR2437083} to show that there is an equivalence between
the singularity categories of $R$ and $Y$. This equivalence was used in
\cite{BW11b} and \cite{1105.4698}.

\newcommand{\cR}{\mathcal{R}}
\begin{lem}
\label{desc_of_perp_of_R_in_Y}
The functor $\RY_* \pi^*: \Db R \to \Db {\coh Y}$ is fully
  faithful and has a right adjoint. Thus the image $\cR$ is a right admissible subcategory of $\Db {\coh Y}$ equivalent to $\Db R$. Moreover, 
the right orthogonal of $\cR$ is
\[ (\RY_* \pi^* \Db R)^\perp = \gend{\cO_Y(-c+2),\ldots, \cO_Y(-1), \cO_Y}.\]
\end{lem}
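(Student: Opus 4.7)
The plan is to combine standard adjunction arguments with an application of Theorem~\ref{main-thm} to the graded ring $A = S/(W)$. For the right adjoint, $\R\pi_*\RY^!$ is right adjoint to $\RY_*\pi^*$ by Grothendieck duality for the proper morphisms $\pi$ (a projective bundle) and $\RY$ (a closed immersion). For fully faithfulness, I would reduce via this adjunction to checking the unit $M \to \R\pi_*\RY^!\RY_*\pi^*M$ is an isomorphism; using the projection formula for $\pi$ together with $\R\pi_*\cO_{\psc R} \cong \cO_{\Spec R}$, this amounts to a local calculation for the closed immersion $\RY$. Although the ideal of $\psc R$ in $Y$ has $c$ generators $f_1,\ldots,f_c$, the relation $W = \sum f_iT_i = 0$ in $A$ makes one $f_i$ locally redundant on each affine chart $D_+(T_j) \subset \psc R$, so $\RY$ is a regular closed immersion of codimension $c-1$, and the computation can be carried out via a Koszul resolution.

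To check that each $\cO_Y(-j) \in \cR^\perp$ for $0 \le j \le c-2$, fix such a $j$ and $M \in \Db R$, and compute via adjunction
\begin{displaymath}
\operatorname{Hom}_{\Db{\coh Y}}(\RY_*\pi^*M, \cO_Y(-j)[k]) \cong \operatorname{Hom}_{\Db R}(M, \R\pi_*\RY^!\cO_Y(-j)[k]).
\end{displaymath}
The conormal sheaf $\cN$ of $\RY$ sits in a graded Euler-type exact sequence $0 \to \cO_{\psc R}(-1) \to \cO_{\psc R}^c \to \cN \to 0$ (reflecting the relation $\sum T_i f_i = 0$), so $\det\cN^\vee \cong \cO_{\psc R}(-1)$. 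Grothendieck duality for $\RY$ then gives $\RY^!\cO_Y(-j) \cong \cO_{\psc R}(-j-1)[c-1]$. For $0 \le j \le c-2$, the twist $-j-1$ lies in $\{-1,\ldots,-(c-1)\}$, a range in which $\R\pi_*\cO_{\psc R}$ vanishes by the standard cohomology of projective space, so all such Hom's vanish.

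Finally, to identify the full right orthogonal, I would apply Theorem~\ref{main-thm} to $A = S/(W)$. Since $A_0 = Q$ is regular, $A$ is commutative noetherian (hence satisfies condition $\chi$), and a direct Koszul computation yields $a$-invariant $a = c-1$, positive for $c \ge 2$; the theorem then gives, with $i = 0$, the semiorthogonal decomposition
\begin{displaymath}
\Db{\coh Y} = \langle \cO_Y(-c+2), \ldots, \cO_Y, \widetilde{\mathcal{B}_0} \rangle,
\end{displaymath}
so $\widetilde{\mathcal{B}_0}^\perp = \langle \cO_Y(-c+2), \ldots, \cO_Y \rangle$. The previous step gives $\cR \subseteq \widetilde{\mathcal{B}_0}$, hence $\cR^\perp \supseteq \widetilde{\mathcal{B}_0}^\perp$. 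The main obstacle is the reverse inclusion, equivalent to showing $\cR = \widetilde{\mathcal{B}_0}$, i.e., that the fully faithful embedding $\RY_*\pi^*: \Db R \to \widetilde{\mathcal{B}_0}$ is essentially surjective. I expect this to be handled by identifying the geometric functor $\RY_*\pi^*$ with the composition $\Db R \xrightarrow{\sim} \Dsg\grA \xrightarrow{\bb_0} \widetilde{\mathcal{B}_0}$, where the first equivalence arises from the matrix factorization description that is the main subject of the rest of the section.
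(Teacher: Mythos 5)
Your outline for the right adjoint, for full faithfulness, and for the containment $\gend{\cO_Y(-c+2),\ldots,\cO_Y}\subseteq (\RY_*\pi^*\Db R)^\perp$ is sound, and in fact reconstructs the substance of what the paper merely cites: the proof in the text consists of a reference to Orlov \cite[2.2]{MR2437083} for full faithfulness (proved there exactly by the duality/Koszul argument you sketch) and to \cite[2.10]{MR2437083} for the computation of the orthogonal. Your identification $\RY^!\cO_Y(-j)\cong\cO_{\psc R}(-j-1)[\pm(c-1)]$ via the conormal sequence, combined with the vanishing of $\R\pi_*\cO_{\psc R}(m)$ for $-c+1\le m\le -1$, is the right way to see that each $\cO_Y(-j)$ with $0\le j\le c-2$ lies in the right orthogonal.

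The genuine gap is the reverse inclusion $(\RY_*\pi^*\Db R)^\perp\subseteq\gend{\cO_Y(-c+2),\ldots,\cO_Y}$, which you correctly flag as the main obstacle but then propose to close in a way that is circular. Identifying $\RY_*\pi^*$ with $\bb_0$ precomposed with an equivalence $\Db R\xra{\cong}\Dsg\grA$ presupposes exactly the equivalence $\Db R\cong\hgrmf\cong\Dsg\grA$ that the final theorem of this section deduces \emph{from} the present lemma; no such equivalence is available a priori (the known inputs are $\hgrmf\cong\Dsg\grA$ and, from \cite{BW11b}, $\Dsg R\cong[MF(\psc Q,\cO(1),W)]$, neither of which identifies $\Db R$ with $\Dsg\grA$). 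What is actually required is a generation statement: $\Db{\coh Y}$ is generated by $\mathcal{R}$ together with $\cO_Y(-c+2),\ldots,\cO_Y$, so that nothing else can sit in $\mathcal{R}^\perp$. This is the content of Orlov's \cite[2.10]{MR2437083} (stated there for the left orthogonal $\gend{\cO_Y(1),\ldots,\cO_Y(c-1)}$; as the paper notes, a slight reworking gives the right orthogonal), and it needs an independent d\'evissage --- e.g.\ resolving $\RY_*\cO_{\psc R}(k)$ by twists of $\cO_Y$ and a Beilinson-type argument --- rather than an appeal to Theorem~\ref{main-thm}.
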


\begin{proof}
Orlov shows in \cite[2.2]{MR2437083} that the functor $\RY_* \pi^*: \Db R \to \Db {\coh Y}$ is fully
  faithful and has a right adjoint (the existence of a right adjoint
  to $\RY_*$ is one formulation of Grothendieck duality in this
  context). He also shows in \cite[2.10]{MR2437083} that the left
  orthogonal of the image is $\gend{ \cO_Y(1), \ldots, \cO_Y(c-1)}$; a
  slight reworking of this argument shows the right orthogonal is as claimed.
\end{proof}

\subsection{The equivalence}
We continue to assume that $R$ is a complete intersection of the form
$Q/(\bf)$, where $Q$ is a commutative regular ring of finite Krull dimension, and $\bf =
f_1, \ldots, f_c$ is a $Q$-regular sequence. Recall that $A = Q[T_1,
\ldots, T_c]/(f_1 T_1 + \ldots + f_c T_c) = S/(W)$. We wish to apply
Theorem \ref{main-thm} to this ring. We first must show $A$ is
Gorenstein. This holds by ``graded
local duality'' as in \cite[\S 3.4]{MR1251956}.
\begin{lem}
  There is an isomorphism in
  $\Db\grA$,
\[ \RuHom \grA {A_0} A \cong A_0[-n](c-1),\]
where $n = \dim A$. In
particular $A$ is a Gorenstein ring with $a$-invariant $c-1$.
\end{lem}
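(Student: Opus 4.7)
The plan is to compute $\RuHom{\grA}{A_0}{A}$ directly, exploiting the presentation $A = S/(W)$ with $S = Q[T_1, \ldots, T_c]$ and $W = f_1 T_1 + \cdots + f_c T_c$ of degree one. A short induction on the multidegree of a hypothetical annihilator, using that $f_1$ is a non-zero-divisor in $Q$, shows $W$ is a non-zero-divisor in $S$, so that $0 \to S(-1) \xra{W} S \to A \to 0$ is a projective resolution of $A$ over $S$. On the other side, $A_0 = Q \cong S/(T_1, \ldots, T_c)$ has a Koszul resolution over $S$ whose self-duality yields $\RuHom{\grS}{Q}{S} \cong Q(c)[-c]$. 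Applying $\RuHom{\grS}{Q}{-}$ to the triangle $S(-1) \xra{W} S \to A$ and noting that $W \in (T_1, \ldots, T_c)$ annihilates $Q$, the connecting morphism is zero and the triangle splits to give
\[ \RuHom{\grS}{Q}{A} \cong Q(c)[-c] \oplus Q(c-1)[1-c]. \]

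Next I would transfer this to $A$ via the change-of-rings adjunction $\RuHom{\grS}{Q}{A} \cong \RuHom{\grA}{Q \otimes_S^L A}{A}$. Applying $Q \otimes_S^L -$ to the same triangle (using once again that $W$ annihilates $Q$) yields $Q \otimes_S^L A \cong Q \oplus Q(-1)[1]$ in $\Db{\grA}$, so combining the two computations
\[ \RuHom{\grA}{Q}{A} \oplus \RuHom{\grA}{Q}{A}(1)[-1] \cong Q(c-1)[1-c] \oplus Q(c)[-c]. \]
Taking cohomology degree by degree, and using that the two right-hand summands have distinct grading twists $(c-1)$ and $(c)$, uniquely forces $\RuHom{\grA}{Q}{A} \cong Q(c-1)[1-c]$. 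This exhibits $A$ as Gorenstein with $a$-invariant $c-1$.

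The main obstacle is the uniqueness step extracting $X := \RuHom{\grA}{Q}{A}$ from its direct-sum equation with $X(1)[-1]$; one solves this inductively on cohomological degree, using the grading to separate contributions of $X$ from those of $X(1)[-1]$. A conceptually cleaner alternative would be to first compute the graded canonical module $\omega_A \cong \Ext 1 S A {\omega_S} \cong A(1-c)$ from $\omega_S \cong S(-c)$, and then invoke graded local duality in the style of Bruns--Herzog \S3.4; this is presumably the route alluded to in the lemma's proof sketch, but requires some care since $A_0 = Q$ is not assumed to be a field.
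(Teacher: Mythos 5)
The paper itself offers no argument for this lemma beyond the citation to graded local duality in Bruns--Herzog, so a self-contained computation is welcome, and your overall strategy (pass to the regular ring $S$, use the Koszul resolution of $Q=S/(T_1,\dots,T_c)$ and the length-one resolution $S(-1)\xra{W}S$ of $A$) is the right one. The first half is correct: $W$ is a non-zero-divisor, $\RuHom{\grS}{Q}{S}\cong Q(c)[-c]$, and the connecting map for $\RuHom{\grS}{Q}{-}$ applied to $S(-1)\xra{W}S\to A$ is multiplication by $W$ between two objects each concentrated in a single cohomological degree, hence zero, giving $\RuHom{\grS}{Q}{A}\cong Q(c)[-c]\oplus Q(c-1)[1-c]$. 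The gap is the claim that $Q\otimes^{\L}_SA\cong Q\oplus Q(-1)[1]$ \emph{in $\Db{\grA}$}. That splitting is valid in $\Db{\grS}$, where one may compute with the $S$-free resolution of $A$, but the adjunction step requires the decomposition in $\Db{\grA}$, where $Q\otimes^{\L}_SA$ is represented by the Koszul complex $K(T_1,\dots,T_c;A)$ --- a bounded complex of finitely generated free $A$-modules, hence perfect over $A$. Since $Q=A_0$ has infinite projective dimension over $A$ whenever $A$ is singular (the case of interest), $Q\oplus Q(-1)[1]$ is not perfect and the two objects cannot be isomorphic. Concretely, for $Q=k[x]$, $c=1$, $f_1=x^2$, so $A=k[x,T]/(x^2T)$, the complex $A(-1)\xra{T}A$ is a non-split extension of $Q$ by $Q(-1)[1]$: every morphism $Q\to(A(-1)\xra{T}A)$ in $\Db{\grA}$ induces multiplication by an element of $x^2Q$ on $H^0$, so the projection to $H^0$ has no section. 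Working instead with the truncation triangle $Q(-1)[1]\to Q\otimes^{\L}_SA\to Q\to$ does not repair the argument: the resulting long exact sequence pins down $\uExt{j}{\grA}{Q}{A}$ only up to the kernel and cokernel of an unidentified map $Q(c)\to Q(c)$.

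The clean fix is to use the change-of-rings adjunction in the other direction (coinduction rather than extension of scalars): for $M\in\Db{\grA}$ one has $\RuHom{\grS}{M}{S}\cong\RuHom{\grA}{M}{\RuHom{\grS}{A}{S}}$, and dualizing $S(-1)\xra{W}S$ gives $\RuHom{\grS}{A}{S}\cong A(1)[-1]$. Taking $M=Q$ yields $\RuHom{\grA}{Q}{A}(1)[-1]\cong Q(c)[-c]$, i.e.\ $\RuHom{\grA}{A_0}{A}\cong A_0(c-1)[-(c-1)]$, with no splitting or cancellation argument needed; one should also record that $\id_AA<\infty$ (standard for a hypersurface in a regular ring of finite Krull dimension) to conclude $A$ is Gorenstein in the sense of Definition~\ref{defn_gor}. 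Note that your target shift $[-(c-1)]$ --- which this corrected computation confirms, and which one can verify directly in the example above where $\uHom{\grA}{A_0}{A}=x^2A\cong A_0$ sits in degree $0=c-1$ --- is the grade of $A_{\geq1}$ on $A$ and agrees with the lemma's $n=\dim A$ only when $\dim Q=0$; the statement's normalization of $n$ appears to be a slip, but only the twist $c-1$, the $a$-invariant, is used in the sequel.
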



We now have:
\begin{thm}
 There is an equivalence
\[ \Psi: \Db R
\xra{\cong} \hgrmf \]
given by $\Psi =  q\left(\R \Gamma_{\geq 0}\right)\RY_*
\pi^*$, where $q$ is the composition $\Db \grAo \xra{p} \Dsg \grA \xra{\cong} \hgrmf$.
\end{thm}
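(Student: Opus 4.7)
The plan is to assemble $\Psi$ as a composition of four equivalences coming from Lemma~\ref{desc_of_perp_of_R_in_Y}, Theorem~\ref{main-thm}, Proposition~\ref{so-decomp-grAi-sing-cat}, and \eqref{gr_mf_sing_equiv}. First I verify the hypotheses of Theorem~\ref{main-thm} for $A = S/(W)$: by the preceding lemma, $A$ is Gorenstein with $a$-invariant $a = c-1$; $A_0 = Q$ has finite global dimension since it is regular; and, being commutative, $A$ automatically satisfies condition $\chi$. I concentrate on the case $c \geq 2$; when $c = 1$ one has $a = 0$, the map $\RY$ is an isomorphism (both $\psc R$ and $Y$ reduce to $\Spec R$), and the desired equivalence follows at once from Theorem~\ref{main-thm}(3) and \eqref{gr_mf_sing_equiv}.

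Applying Theorem~\ref{main-thm}(1) with $i = 0$ yields the semiorthogonal decomposition
\[ \Db{\coh Y} = \left ( \cO(-c+2), \ldots, \cO(0), \twid{\mathcal{B}_0} \right ), \]
while Lemma~\ref{desc_of_perp_of_R_in_Y} provides a right admissible subcategory $\cR := \RY_*\pi^*(\Db R)$ satisfying $\cR^\perp = \gend{\cO(-c+2),\ldots,\cO(0)}$. Right admissibility of $\cR$ means that $\Db{\coh Y} = (\cR^\perp, \cR)$ is a semiorthogonal decomposition, so $\cR = {}^\perp(\cR^\perp)$. Comparing with the displayed decomposition forces $\cR = \twid{\mathcal{B}_0}$, and hence $\RY_*\pi^*$ induces an equivalence $\Db R \xra{\sim} \twid{\mathcal{B}_0}$.

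The proof of Theorem~\ref{main-thm}(1) obtains the decomposition of $\Db{\coh Y}$ by applying $\twid{-}$ to the equality $\R\Gamma_{\geq 0}\Db{\coh Y} = (A(-c+2), \ldots, A(0), \mathcal{B}_0)$ in $\Db\grAo$; in particular $\mathcal{B}_0 \subseteq \R\Gamma_{\geq 0}\Db{\coh Y}$, and the restriction of $\twid{-}$ to $\R\Gamma_{\geq 0}\Db{\coh Y}$ is an equivalence with inverse $\R\Gamma_{\geq 0}$. Consequently $\R\Gamma_{\geq 0}$ restricts to an equivalence $\twid{\mathcal{B}_0} \xra{\sim} \mathcal{B}_0$. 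Proposition~\ref{so-decomp-grAi-sing-cat} then identifies $p|_{\mathcal{B}_0} \colon \mathcal{B}_0 \xra{\sim} \Dsg\grA$ as the inverse to $\bb_0$, and \eqref{gr_mf_sing_equiv} supplies the equivalence $\Dsg\grA \simeq \hgrmf$. Composing all of these gives precisely $\Psi$,
\[ \Db R \xra{\RY_*\pi^*} \twid{\mathcal{B}_0} \xra{\R\Gamma_{\geq 0}} \mathcal{B}_0 \xra{p} \Dsg\grA \xra{\sim} \hgrmf , \]
as an equivalence. The delicate point is the identification $\R\Gamma_{\geq 0}(\twid{\mathcal{B}_0}) = \mathcal{B}_0$, which relies on the inclusion $\mathcal{B}_0 \subseteq \R\Gamma_{\geq 0}\Db{\coh Y}$ extracted from the proof of the main theorem.
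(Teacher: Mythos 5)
Your proposal is correct and follows essentially the same route as the paper: apply Theorem~\ref{main-thm} with $i=0$ to get the decomposition $\Db{\coh Y} = (\cO(-c+2),\ldots,\cO,\twid{\Bi})$, match right orthogonals with Lemma~\ref{desc_of_perp_of_R_in_Y} to conclude $\cR = \twid{\Bi}$, and then apply $q\R\Gamma_{\geq 0}$. The extra details you supply (verifying the hypotheses of the main theorem, the $c=1$ case, and the justification that $\R\Gamma_{\geq 0}$ restricts to an inverse of $\twid{(-)}$ on $\Bi$ via the containment $\Bi \subseteq \R\Gamma_{\geq 0}\Db{\coh Y}$) are accurate elaborations of steps the paper leaves implicit.
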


\begin{proof}
 By Theorem \ref{main-thm} applied to $A$ with $i = 0$, we know that
 $\Db {\coh Y}$ has a semiorthogonal decomposition $\left ( \cO_Y(-c + 2), \ldots, \cO_Y,
\twid{\Bi}\right )$
where $\Bi$ is the
image of $\Dsg \grA$ under the fully faithful functor $\bb_i\colon \Dsg \grA
\to \Db \grAi$ described in \eqref{defn-bbi}. Thus $\twid{\Bi}^{\perp}
= \left ( \cO_Y(-c + 2), \ldots, \cO_Y \right )$, which by
Lemma~\ref{desc_of_perp_of_R_in_Y}  is also equal to $\cR^{\perp}$,
where $\cR$ is the image of $\Db R$ under $\RY_*
\pi^*$. Thus $\cR = \twid{\Bi}$ and applying $q \R \Gamma_{\geq 0}$ to
both sides we have an equivalence
\[ \Db R \xra{ q \R \Gamma_{\geq 0}\RY_*
\pi^*} \Dsg \grA.\]
Finally, the equivalence \eqref{gr_mf_sing_equiv} finishes the proof.
\end{proof}

\newcommand{\cE}{\mathcal{E}}
In \cite{BW11b}, it was shown that there is an equivalence
\[ \Dsg R \cong [MF(\psc Q, \cO(1), W)], \]
where $[MF(\psc Q, \cO(1), W)]$ is the homotopy category of
matrix factorizations of locally free sheaves on $\psc Q$. This
category has objects pairs of locally free sheaves $(\mathcal{E}_1, \mathcal{E}_0)$ on $\psc Q$ and
maps $e_1: \cE_1 \to \cE_0$ and $e_0: \cE_0 \to \cE_1(1)$ such that
composition is multiplication by $W$. Morphisms are defined analogously as in the affine case above, however
there is a further localization
at objects that are locally contractible. There is an obvious functor
${\widetilde{(-)}}: \hgrmf \to [MF(\psc Q, \cO(1), W)]$. This
equivalence fits into the following commutative diagram, where the
left hand arrow is the natural projection onto the singularity category.
\[ \xymatrix{ \Db R \ar[rr]_-\cong \ar[d] && \hgrmf \ar[d]^-{\widetilde{(-)}}\\
\Dsg R \ar[rr]_-\cong &&  [MF(\psc Q, \cO(1), W)].
}\]


\begin{thebibliography}{10}

\bibitem{MR1304753}
M.~Artin and J.~J. Zhang.
\newblock Noncommutative projective schemes.
\newblock {\em Adv. Math.}, 109(2):228--287, 1994.

\bibitem{MR2947547}
Matthew Ballard, David Favero, and Ludmil Katzarkov.
\newblock Orlov spectra: bounds and gaps.
\newblock {\em Invent. Math.}, 189(2):359--430, 2012.

\bibitem{1203.6643}
Matthew Ballard, David Favero, and Ludmil Katzarkov.
\newblock Variation of geometric invariant theory quotients and derived
  categories, 2012.

\bibitem{MR2593258}
Vladimir Baranovsky and Jeremy Pecharich.
\newblock On equivalences of derived and singular categories.
\newblock {\em Cent. Eur. J. Math.}, 8(1):1--14, 2010.

\bibitem{MR509388}
A.~A. Beilinson.
\newblock Coherent sheaves on {${\mathbb{P}}^{n}$} and problems in linear
  algebra.
\newblock {\em Funktsional. Anal. i Prilozhen.}, 12(3):68--69, 1978.

\bibitem{MR992977}
A.~I. Bondal.
\newblock Representations of associative algebras and coherent sheaves.
\newblock {\em Izv. Akad. Nauk SSSR Ser. Mat.}, 53(1):25--44, 1989.

\bibitem{MR1251956}
Winfried Bruns and J{\"u}rgen Herzog.
\newblock {\em Cohen-{M}acaulay rings}, volume~39 of {\em Cambridge Studies in
  Advanced Mathematics}.
\newblock Cambridge University Press, Cambridge, 1993.

\bibitem{Bu87}
Ragnar-Olaf Buchweitz.
\newblock {Maximal Cohen-Macaulay modules and Tate-cohomology over Gorenstein
  rings}.
\newblock Unpublished manuscript, 1987.


\bibitem{BW11b}
Jesse Burke and Mark~E. Walker.
\newblock Matrix factorizations in higher codimension.
\newblock ar{X}iv:1205.2552.

\bibitem{Ei80}
David Eisenbud.
\newblock Homological algebra on a complete intersection, with an application
  to group representations.
\newblock {\em Trans. Amer. Math. Soc.}, 260(1):35--64, 1980.

\bibitem{1203.0276}
Daniel Halpern-Leistner.
\newblock The derived category of a {GIT} quotient, 2012.

\bibitem{0803.2045}
Manfred Herbst, Kentaro Hori, and David Page.
\newblock Phases of n=2 theories in 1+1 dimensions with boundary, 2008.

\bibitem{1011.1484}
M.~Umut Isik.
\newblock Equivalence of the derived category of a variety with a singularity
  category.
\newblock ar{X}iv:1011.1484.

\bibitem{MR2776613}
Bernhard Keller, Daniel Murfet, and Michel Van~den Bergh.
\newblock On two examples by {I}yama and {Y}oshino.
\newblock {\em Compos. Math.}, 147(2):591--612, 2011.

\bibitem{MR2931898}
Helmut Lenzing.
\newblock Weighted projective lines and applications.
\newblock In {\em Representations of algebras and related topics}, EMS Ser.
  Congr. Rep., pages 153--187. Eur. Math. Soc., Z\"urich, 2011.

\bibitem{MM}
Hiroyuki Minamoto and Izuru Mori.
\newblock The structure of {AS}-{G}orenstein algebras.
\newblock {\em Adv. Math.}, 226(5):4061--4095, 2011.

\bibitem{MR2641200}
Dmitri Orlov.
\newblock Derived categories of coherent sheaves and triangulated categories of
  singularities.
\newblock In {\em Algebra, arithmetic, and geometry: in honor of {Y}u. {I}.
  {M}anin. {V}ol. {II}}, volume 270 of {\em Progr. Math.}, pages 503--531.
  Birkh\"auser Boston Inc., Boston, MA, 2009.

\bibitem{MR2437083}
Dmitri~O. Orlov.
\newblock Triangulated categories of singularities, and equivalences between
  {L}andau-{G}inzburg models.
\newblock {\em Mat. Sb.}, 197(12):117--132, 2006.

\bibitem{MR0340375}
N.~Popescu.
\newblock {\em Abelian categories with applications to rings and modules}.
\newblock Academic Press, London, 1973.
\newblock London Mathematical Society Monographs, No. 3.

\bibitem{MR2795327}
Ed~Segal.
\newblock Equivalence between {GIT} quotients of {L}andau-{G}inzburg
  {B}-models.
\newblock {\em Comm. Math. Phys.}, 304(2):411--432, 2011.

\bibitem{MR2982435}
Ian Shipman.
\newblock A geometric approach to {O}rlov's theorem.
\newblock {\em Compos. Math.}, 148(5):1365--1389, 2012.

\bibitem{MR1291750}
J.~T. Stafford and J.~J. Zhang.
\newblock Examples in non-commutative projective geometry.
\newblock {\em Math. Proc. Cambridge Philos. Soc.}, 116(3):415--433, 1994.

\bibitem{1105.4698}
Greg Stevenson.
\newblock Subcategories of singularity categories via tensor actions.
\newblock ar{X}iv:1105.4698, to appear in Compos. Math.

\bibitem{MR2077594}
Michel van~den Bergh.
\newblock Non-commutative crepant resolutions.
\newblock In {\em The legacy of {N}iels {H}enrik {A}bel}, pages 749--770.
  Springer, Berlin, 2004.

\end{thebibliography}

\def\cprime{$'$}

\end{document}